\def\nn{{\mathbb N}}
\def\qq{{\mathbb Q}}
\def\zz{{\mathbb Z}}
\def\pp{{\mathbb P}}
\def\ov{\overline}
\def\fx{{\rm Fix}}
\def\Des{{\rm Des}}
\def\Exc{{\rm Exc}}
\def\maj{{\rm maj}}
\def\exc{{\rm exc}}
\def\dex{{\rm Dex}}
\def\des{{\rm des}}
\def\fix{{\rm fix}}
\def\pr{{\rm Par}}
\def\p{{\sf p}}
\def\h{{\sf h}}
\def\H{{\sf H}}
\def\LL{{\sf L}}
\def\rc{{\rm C}}
\def\cm{S_\mu} 
\def\cjl{S_{\lambda,j}}
\def\ajl{a_{\lambda,j}}
\def\qsym{{\mathcal Q}}
\newtheorem{thm}{Theorem}[section]
\newtheorem{lemma}[thm]{Lemma}
\newtheorem{cor}[thm]{Corollary}
\newtheorem{prop}[thm]{Proposition}
\newtheorem{notation}[thm]{Notation}
\theoremstyle{definition}
\theoremstyle{remark}
\newtheorem{remark}[thm]{Remark}
\begin{document}

\title{Eulerian quasisymmetric functions and cyclic sieving}         
\author[Sagan]{Bruce Sagan} 
\address{Department of Mathematics, Michigan State University, East Lansing, MI 48824}
\email{sagan@math.msu.edu}

\author[Shareshian]{John Shareshian$^1$}
\address{Department of Mathematics, Washington University, St. Louis, MO 63130}
\thanks{$^{1}$Supported in part by NSF Grants
DMS 0604233 and 0902142}
\email{shareshi@math.wustl.edu}

\author[Wachs]{Michelle L. Wachs$^2$}
\address{Department of Mathematics, University of Miami, Coral Gables, FL 33124}
\email{wachs@math.miami.edu}
\thanks{$^{2}$Supported in part by NSF Grants
DMS 0604562 and 0902323}

\date{September 15, 2009}          

\maketitle

 \vspace{-.1in}\begin{center}{\it Dedicated to Dennis Stanton}
\end{center}

\begin{abstract} It is shown that a refined version of a q-analogue of the Eulerian numbers together with the  action, by conjugation, of the  subgroup of the symmetric group $S_n$ generated   by the $n$-cycle $(1,2,\dots,n)$ on the set of permutations of fixed cycle type and fixed number of excedances provides an instance of the cyclic sieving phenonmenon of  Reiner, Stanton and White.  The main tool is a class of  symmetric functions  recently introduced in work  of two of the authors.
 \end{abstract}

\
\section{Introduction}  \label{intro}     

In \cite{sw,ShWa1}, certain quasisymmetric functions, called ``Eulerian quasisymmetric functions" are introduced and shown to be in fact symmetric functions.  These symmetric functions have been useful in the study of the joint distribution of the  permutation statistics, major index and excedance number.   There are various versions of the Eulerian quasisymmetric functions. They are defined  by first associating a fundamental quasisymmetric function with each permutation in the symmetric group $S_n$ and then summing these fundamental quasisymmetric functions over permutations in certain subsets of $S_n$.     To obtain the most refined versions,  the cycle-type Eulerian quasisymmetric functions $Q_{\lambda,j}$,   one sums the fundamental quasisymmetric functions associated with the permutations  having exactly $j$ excedances and cycle type $\lambda$.   By summing over all the permutations in  $S_n$ having $j$ excedances and $k$ fixed points, one obtains the less refined version $Q_{n,j,k}$.  The precise  definition of the Eulerian quasisymmetric functions and other  key terms can be found in Section \ref{defs}.

Shareshian and Wachs \cite{sw,ShWa1} derive a formula for the generating function of $Q_{n,j,k}$ which specializes to a  $(q,r)$-analog of a classical formula for the exponential generating function of the Eulerian polynomials.  The $(q,r)$-analogue  of the  classical formula is given by
 \begin{equation} \label{expgeneqfix}
\sum_{n \geq 0}A^{\maj,\exc,\fix}_n(q,t,r)\frac{z^n}{[n]_q!}=\frac{(1-tq)\exp_q(rz)}{\exp_q(ztq)-tq\exp_q(z)},
\end{equation}
where 
 $$A^{\maj,\exc,\fix}_n(q,t,r) := \sum_{\sigma \in S_n} q^{\maj(\sigma)} t^{\exc(\sigma)} r^{\fix(\sigma)},$$  and 
\[
\exp_q(z):=\sum_{n \geq 0}\frac{z^n}{[n]_q!}.
\]

The cycle-type Eulerian quasisymmetric functions $Q_{\lambda,j}$   remain somewhat mysterious, and one might expect that better understanding of them will lead to further results on permutation statistics.  In this paper, we provide evidence that this expectation is reasonable.  With Theorem \ref{psum}, we prove a conjecture from \cite{ShWa1}, describing the expansion of $Q_{\lambda,j}$, for  $\lambda=(n)$,  in terms of the power sum basis for the space of symmetric functions.  Combining Theorem~\ref{psum} with a technique of D\'esarm\'enien \cite{des},  we are able to evaluate, at all $n^{th}$ roots of unity,
 the {\it cycle-type $q$-Eulerian numbers} 
\begin{equation} \label{defqeuler}
\ajl(q):=\sum_{\sigma \in \cjl}q^{\maj(\sigma)-\exc(\sigma)},
\end{equation}
where $\cjl$ is the set of all $\sigma \in S_n$ having exactly $j$ excedances and cycle type $\lambda$.  This  and an analysis of the excedance statistic on the centralizers $C_{S_n}(\tau)$  of  certain permutations $\tau\in S_n$ enable us to establish the 
 relationship between 
    the polynomials $a_{\lambda,j}(q)$
and the  cyclic sieving phenomenon of Reiner, Stanton and White \cite{RSW} given in Theorem \ref{main1} below.   

\begin{notation}
For a positive integer $d$, $\omega_d$ will denote throughout this paper an arbitrary complex primitive $d^{th}$ root of $1$.
\end{notation}

\begin{thm} \label{main1}
Let $\gamma_n=(1,2,\ldots, n) \in S_n$ and let $G_n=\langle \gamma_n \rangle \leq S_n$.  Then for all  partitions  $\lambda$  of $n$ and  $j \in \{0,1,\dots,n-1\}$,
the group $G_n$ acts on $\cjl$ by conjugation and the triple $(G_n,\cjl,\ajl(q))$ exhibits the cyclic sieving phenomenon.  In other words, if $\tau \in G_n$ has order $d$ then
\begin{equation} \label{maineq1}
\ajl(\omega_d)=|C_{S_n}(\tau) \cap \cjl|.
\end{equation}
\end{thm}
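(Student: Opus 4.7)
The proof naturally splits into two independent computations: evaluating $\ajl(\omega_d)$ via symmetric-function methods and counting $|C_{S_n}(\tau)\cap \cjl|$ directly in the centralizer. Note first that the conjugation action of $\tau$ on $\cjl$ has fixed-point set exactly $C_{S_n}(\tau)\cap \cjl$, so \eqref{maineq1} is precisely the Reiner--Stanton--White cyclic sieving assertion for the triple $(G_n,\cjl,\ajl(q))$.

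To compute the left-hand side, I would exploit the Eulerian quasisymmetric function $Q_{\lambda,j}$. Since $Q_{\lambda,j}$ is a sum of fundamental quasisymmetric functions whose stable principal specialization carries the summand for $\sigma$ to $q^{\maj(\sigma)-\exc(\sigma)}/\prod_{i=1}^{n}(1-q^{i})$, one obtains
\begin{equation*}
\mathrm{ps}(Q_{\lambda,j})(q) = \frac{\ajl(q)}{\prod_{i=1}^{n}(1-q^{i})}.
\end{equation*}
Theorem~\ref{psum} supplies the power-sum expansion $Q_{(n),j}=\sum_{\mu} c_{\mu}\,p_{\mu}$ in the single-$n$-cycle case; for general $\lambda$, I would derive an analogous expansion either plethystically from the generating-function identity \eqref{expgeneqfix} lifted to the symmetric-function level, or by directly adapting the descent-set arguments of \cite{sw,ShWa1}.

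D\'esarm\'enien's technique then handles the root-of-unity evaluation. Using $\mathrm{ps}(p_\mu)=\prod_i(1-q^{\mu_i})^{-1}$, at $q=\omega_d$ the denominator $\prod_{i=1}^{n}(1-q^{i})$ vanishes to order $\lfloor n/d\rfloor$, while $\prod_i(1-q^{\mu_i})$ vanishes to the order equal to the number of parts of $\mu$ divisible by $d$. Only partitions $\mu$ achieving the maximal such count survive in the limit, producing a clean closed form for $\ajl(\omega_d)$ supported on these ``$d$-regular'' partitions.

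The remaining task is to match this formula with $|C_{S_n}(\tau)\cap \cjl|$. For $\tau=\gamma_n^{n/d}$, which has cycle type $(d^{n/d})$ (note $d\mid n$ is forced by $\tau\in G_n$), the centralizer $C_{S_n}(\tau)$ is isomorphic to the wreath product $\zz_d \wr S_{n/d}$. I would decompose each $\sigma\in C_{S_n}(\tau)$ according to its induced permutation of the $n/d$ orbits of $\langle \tau\rangle$ together with the per-orbit twists by powers of $\tau$, then translate the conditions ``cycle type $\lambda$'' and ``$\exc(\sigma)=j$'' into explicit constraints on this wreath data. The main obstacle, I expect, is precisely this final excedance analysis: one must show that the resulting enumeration inside $\zz_d \wr S_{n/d}$ matches, term by term over the same $d$-regular partitions, the output of the D\'esarm\'enien limit. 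Once the two independent computations agree, \eqref{maineq1} follows.
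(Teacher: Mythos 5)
Your strategy is the paper's strategy: evaluate $\ajl(\omega_d)$ as a power-sum coefficient of $Q_{\lambda,j}$ via D\'esarm\'enien's technique (this is exactly Proposition~\ref{thdes} and Proposition~\ref{evalth}, and your vanishing-order argument correctly isolates the surviving partitions $d^k$ and $1d^k$), then count $C_{S_n}(\tau)\cap\cjl$ by decomposing the centralizer as $\zz_d\wr S_{n/d}$ and tracking excedances through the wreath-product data. Two small corrections: the general-$\lambda$ expansion does not come from lifting \eqref{expgeneqfix} (that identity only sees the fixed-point refinement, not cycle type); the right tool is the plethystic formula \eqref{ple}, $\sum_j Q_{\lambda,j}t^j=\prod_i \h_{m_i(\lambda)}\bigl[\sum_l Q_{(i),l}t^l\bigr]$, which reduces everything to $Q_{(n),j}$ and hence to Theorem~\ref{psum}. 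Also ``$d$-regular'' is not the standard name for the surviving partitions; they are literally $d^k$ or $1d^k$.

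The genuine gap is the step you defer as ``the main obstacle'': the term-by-term matching is not a routine verification, and the idea that makes it work is absent from your outline. On the centralizer side, writing $\sigma=\tau_1^{e_1}\cdots\tau_k^{e_k}\hat\rho$ with $\exc(\sigma)=dE_0+\sum_i e_i$, the paper's Lemma~\ref{ckr} shows that when $\rho$ is a $k$-cycle the cycle type of $\sigma$ is the rectangle $r^{n/r}$ where $r$ is determined by $\gcd\bigl(\sum_i e_i,d\bigr)=\gcd(\exc(\sigma),d)$; consequently the cycle-type refinement of the excedance polynomial $tA_{k-1}(t)[d]_t^k$ is obtained by the truncation $f(t)_{d,c}$ that keeps only exponents $j$ with $\gcd(j,d)=c$. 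This is precisely the same truncation that appears on the symmetric-function side through $G_\nu(t)=\bigl(tA_{k-1}(t)\prod_i[\nu_i]_t\bigr)_{g(\nu)}$ in Theorem~\ref{psum}, and the identity $\bigl(tA_{k-1}(t)[b]_t^k\bigr)_{b,c}=c^{k-1}\bigl(t^cA_{k-1}(t^c)[b/c]_{t^c}^k\bigr)_{b,c}$ (Lemma~\ref{ftbclem}) is what reconciles the two after the plethystic substitution $\p_{\mu_i}[\cdot]$ rescales $t\mapsto t^{\mu_i}$. Without identifying this gcd--truncation correspondence, together with the multiplicative reduction from general $\lambda$ to rectangles $r^m$ and then to single cycles (Theorems~\ref{chilamnu} and~\ref{last} on the two sides), your two computations cannot be compared, so the proof is not complete as proposed.
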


For $\lambda$ a partition of $n$, let $S_\lambda$ be the set of all $\sigma \in S_n$ of cycle type $\lambda$ and define the {\it cycle type Eulerian polynomial} associated with $\lambda$ as $$A_\lambda^{\maj,\exc}(q,t):= \sum_{\sigma \in S_\lambda} q^{\maj(\sigma)} t^{\exc(\sigma)}.$$ Then 
 (\ref{maineq1}) can be rewritten as
\begin{equation} \label{maineq2} A_\lambda^{\maj,\exc}(\omega_d, t \omega_d^{-1}) = \sum_{\sigma \in C_{S_n}(\tau) \cap S_\lambda} t^{\exc(\sigma)},\end{equation}  
 which is clearly a refinement of 
\begin{equation} \label{majexcfixeq} A_n^{\maj,\exc,\fix}(\omega_d,t\omega_d^{-1},s) = \sum_{\sigma \in C_{S_n}(\tau) } t^{\exc(\sigma)} s^{\fix(\sigma)}.\end{equation} 
We also prove that both sides of (\ref{majexcfixeq}) are, in fact,  equal to 
\begin{equation} \label{bothsideseq} A^{\maj,\exc,\fix}_{\frac n d}(1,t, \frac{s^d+t[d-1]_t}{[d]_t})\,\, [d]_t^{\frac n d},\end{equation} which by setting $s=1$ yields
$$ A_{n}^{\maj,\exc}(\omega_d,t \omega_d^{-1}) = A_{\frac{n}{d}}(t) \,[d]^{\frac {n} d}_t,$$
for all divisors $d$ of $n$.
For the  cycle-type Eulerian polynomial $A_{(n+1)}^{\maj,\exc}(q,t)$, we  obtain the similar looking result,
\begin{equation} \label{otherform} A_{(n+1)}^{\maj,\exc}(\omega_d,t \omega_d^{-1}) = t A_{\frac{n}{d}}(t) \,[d]^{\frac {n} d}_t,
\end{equation}
for all divisors $d$ of $n$.

The paper is organized as follows.  In Section~\ref{defs}, we review  definitions of various terms such as  cyclic sieving and Eulerian quasisymmetric functions.  We also present some preliminary results on Eulerian quasisymmetric functions from \cite{ShWa1}.    In Section~\ref{symsec} we describe a technique that uses symmetric function theory  to evaluate certain polynomials at roots of unity  based on  work of D\'esarm\'enien \cite{des}.   
Theorem~\ref{psum} mentioned above is proved in Section~\ref{psumsec} by means of results of \cite{ShWa1} which enable one to express the cycle-type Eulerian quasisymmetric functions in terms of the less refined version of Eulerian quasisymmetric functions.   The proof of Theorem~\ref{main1} appears in Section~\ref{eqth1sec}.
 In  Section~\ref{finsec}, we  prove that both sides of (\ref{majexcfixeq}) are equal
to (\ref{bothsideseq}), that (\ref{otherform}) holds, and  that another  triple  exhibts the cyclic sieving phenomenon, namely $(G_{n}, S_{n,j}, a_{(n+1),j+1}(q))$, where $S_{n,j}$ is the set of all permutations in $S_n$ with $j$ excedances.

\section{Definitions, known facts and preliminary results} \label{defs}

\subsection{Cyclic Sieving} \label{cycsiv}

 Let $G$ be a finite cyclic group acting on a set $X$, and let $f(q)$ be a polynomial in $q$ with nonnegative integer coefficents.  
For $g \in G$, let $\fx(g)$ be the set of fixed points of $g$ in $X$.   The triple $(G,X,f(q))$ {\it exhibits the cyclic sieving phenomenon}  of  Reiner, Stanton and White  \cite{RSW} if for each $g \in G$ we have
\begin{equation} \label{cseq}
f(\omega_{|g|})=|\fx(g)|,
\end{equation}
where $|g|$ is the order of $g$.

\begin{remark}  
Since all elements of order $d$ in a cyclic group $G$ generate the same subgroup, they have the same set of fixed points in any action.  Thus our formulation of the cyclic sieving phenomenon  is equivalent to the definition given in \cite{RSW}.
\end{remark}

Note that if $(G,X,f(q))$ exhibits the cyclic sieving phenomenon then $f(1)=|X|$, and interesting examples arise where $f(q)$ is the generating function for some natural statistic on $X$, that is, there exists some useful function $s:X \rightarrow \nn$ such that
\[
f(q)=\sum_{x \in X}q^{s(x)}.
\]
See \cite{RSW} for many examples, and \cite{RSW2,EF,BRS, BR, Rho, PPR, PS} for more recent work.

\subsection{Permutation Statistics}
Recall that for a permutation $\sigma \in S_n$ acting from the right on $[n]:=\{1,\ldots,n\}$, the {\it excedance set} of $\sigma$ is
\[
\Exc(\sigma):=\{i \in [n-1]:i\sigma>i\}
\]
and the {\it descent set} of $\sigma$ is
\[
\Des(\sigma):=\{i \in [n-1]:i\sigma>(i+1)\sigma\}.
\]
The {\it major index} of $\sigma$ is
\[
\maj(\sigma):=\sum_{i \in \Des(\sigma)}i,
\]
and the {\it excedance and descent statistics} of $\sigma$ are, respectively,
\[
\exc(\sigma):=|\Exc(\sigma)|,
\]
and
\[
\des(\sigma):=|\Des(\sigma)|.
\]
Let $\fx(\sigma)$  denote the set of fixed points of $\sigma$, that is 
$$\fx(\sigma) := \{i \in [n] : i\sigma = i \}$$
and let 
$$\fix(\sigma) := |\fx(\sigma)|.$$

The excedance and descent statistics are equidistributed,  and the $n^{th}$ {\it Eulerian polynomial} $A_n(t)$ can be defined as
\[
\sum_{\sigma \in S_n}t^{\exc(\sigma)}=A_n(t)=\sum_{\sigma \in S_n}t^{\des(\sigma)}.
\]
The Eulerian polynomial is also the generating polynomial for the {\it ascent statistic} on $S_n$, ${\rm asc}(\sigma):=|\{i \in [n]:i\sigma  < (i+1)\sigma\}|$.

For permutation statistics $
{s_1},\ldots,{ {s_k}}$ and a positive integer $n$, define
the polynomial
\[
A_n^{ s_1,\ldots,s_k}(t_1,\ldots,t_k):=\sum_{\sigma \in
S_n}t_1^{s_1  (\sigma) } t_2^{ s_2
(\sigma)}\cdots t_k^{s_k  (\sigma)}.
\]
Also, set
\[
A_0^{ s_1,\ldots,s_k}(t_1,\ldots,t_k):=1.
\]

\subsection{Partitions and Symmetric Functions}

We use standard notation for partitions and symmetric functions.  References for basic facts are \cite{Mac,Sta2,Sag}.  In particular,  $\p_\lambda$ and $\h_\lambda$ will denote, respectively, the power sum and complete homogeneous symmetric functions associated to a partition $\lambda$. We use $l(\lambda)$ to denote the number of (nonzero) parts of $\lambda$ and $m_j(\lambda)$ to denote the number of parts of $\lambda$ equal to $j$.   We write $\pr(n)$ for the set of all partitions of $n$.  For $\lambda \in \pr(n)$, define the number
$$
z_\lambda:=\prod_{j=1}^{n}j^{m_j(\lambda)}m_j(\lambda)!.$$

We use two standard methods to describe a partition $\lambda \in \pr(n)$.  The first is to write $\lambda=(\lambda_1,\ldots,\lambda_{l(\lambda)})$, listing the (nonzero) parts of $\lambda$ so that $\lambda_i \geq \lambda_{i+1}$ for all $i$.  The second is to write $\lambda=1^{m_1(\lambda)}\ldots n^{m_n(\lambda)}$, usually suppressing those synbols $i^{m_i(\lambda)}$ such that $m_i(\lambda)=0$ and writing $i^1$ as simply $i$.  In particular, if $n=dk$ then $d^k$ represents the partition with $k$ parts of size $d$ and no other parts. 
If $\lambda=(\lambda_1,\ldots,\lambda_k) \in \pr(n)$ and $q \in \qq$ with $q\lambda_i \in \pp$ for all $i \in [k]$, we write $q\lambda$ for $(q\lambda_1,\ldots,q\lambda_k) \in \pr(qn)$.

For each $\sigma \in S_n$, let $\lambda(\sigma)$ denote the cycle type of $\sigma$.
Given $\lambda \in \pr(n)$, we write $S_\lambda$ for the set of all $\sigma \in S_n$ having cycle type $\lambda$.  As in (\ref{defqeuler}), we write $\cjl$ for the set of those $\sigma \in S_\lambda$ satisfying $\exc(\sigma)=j$.

For symmetric functions $f,g$ with coefficients in $\qq[t]$, $f[g]$ will denote the plethysm of $g$ by $f$.  The same notation will be used for plethysm of symmetric power series with no bound on their degree.  One such power series is $\H:=\sum_{n \geq 1}\h_n$.  If we set
\begin{eqnarray*}
\LL & := & \sum_{d \geq 1} \frac{\mu(d)}{d}\log(1+\p_d) \\ & = & \sum_{d \geq 1}\frac{\mu(d)}{d}\sum_{i \geq 1}\frac{(-1)^{i-1}}{i}\p_d^i,
\end{eqnarray*}
where $\mu$ is the classical M\"obius function, then $\H$ and $\LL$ are plethystic inverses, that is,
\begin{equation} \label{plinv}
\LL[\H[f]]=\H[\LL[f]]=f
\end{equation}
for all symmetric power series $f$.  (This is due to Cadogan, see \cite{Cad} or \cite[Exercise 7.88e]{Sta2}.)  Note also that for any power series $h(t,x_1,x_2,\ldots)$ with coefficients in $\qq$ that is symmetric in $x_1,x_2,\ldots$ and any $d \in \pp$, we have
\begin{equation} \label{pple}
\p_d[h]=h(t^d,x_1^d,x_2^d,\ldots).
\end{equation}
We shall use without further mention the facts $(f+g)[h]=f[h]+g[h]$ and $(fg)[h]=f[h]g[h]$.

\subsection{$q$-Analogues}
We use the standard notation for polynomial analogues of positive integers, that is, for a positive integer $n$ and a variable $q$, we define
\[
[n]_q:=\sum_{j=0}^{n-1}q^j = \frac{1-q^n}{1-q}
\]
and
$$[n]_q!:= [n]_q[n-1]_q\cdots[1]_q.$$
Also define $$[0]_q! :=1.$$
It is well-known  that for any sequence $(k_1,\dots, k_m)$ of nonnegative integers whose sum is  $n$, the  $q$-multinomial coefficient
 $$ \left [ \begin{array}{c} n \\ k_1,\dots ,k_{m} \end{array} \right] _q := \frac{[n]_q!} {[k_1]_q! \cdots [k_m]_q! } $$
is always a polynomial in $\nn[q]$.  The following $q$-analogue of the multinomial version of  the Pascal recurrence relation is also well-known (see \cite[(17b)]{Sta1}):
\begin{equation} \label{pascal}  \left [ \begin{array}{c} n \\ k_1,\dots ,k_m \end{array} \right]_q =    \sum_{i=1}^{m} q^{ k_{i+1} +\dots + k_{m}}
 \left [ \begin{array}{c} n-1 \\ k_1,\dots ,k_i-1, \dots, k_{m} \end{array} \right]_q,\end{equation}
 where $(k_1,\dots, k_m)$ is a sequence of positive integers whose sum is $n$.
We will need the  following  elementary fact, which also plays a role in the work of Reiner, Stanton and White \cite{RSW}.

\begin{prop}[see {\cite[Equation (4.5)] {RSW}}]  \label{multinom}   Let   $(k_1,\dots,k_m)$  be  a sequence of nonnegative integers whose sum is $n$. If $d|n$ then
 $$ \left [ \begin{array}{c} n \\ k_1,\dots ,k_{m} \end{array} \right] _q |_{q=\omega_d} =
 \begin{cases}  \left ( \begin{array}{c} \frac n d \\ \frac {k_1} d,\dots ,\frac {k_{m}} d \end{array} \right) &\mbox{ if } d| k_i \,\, \forall i \in [m]
 \\  \,\, 0 &\mbox{ otherwise. } \end{cases}$$
\end{prop}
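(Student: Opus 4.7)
The plan is to prove the two cases by comparing orders of vanishing at $\omega_d$ in numerator and denominator, and by using the factoring identity $[jd]_q = [d]_q [j]_{q^d}$.

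First I would recall the cyclotomic factorization $[n]_q = \prod_{\ell \mid n, \ell > 1} \Phi_\ell(q)$, which yields
\[
[n]_q! = \prod_{\ell \geq 2} \Phi_\ell(q)^{\lfloor n/\ell \rfloor}.
\]
Since the $q$-multinomial is a polynomial in $\nn[q]$ and $\Phi_d(q)$ is the only cyclotomic factor vanishing at $\omega_d$, its order of vanishing at $q = \omega_d$ equals
\[
\lfloor n/d \rfloor - \sum_{i=1}^{m} \lfloor k_i/d \rfloor.
\]
Because $d \mid n$ and $\sum k_i = n$, this quantity is $n/d - \sum_i \lfloor k_i/d \rfloor \geq 0$, with equality if and only if $d \mid k_i$ for every $i$. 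This immediately gives the vanishing claim in the second case of the proposition.

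For the case when $d \mid k_i$ for all $i$, I would write $n = Nd$ and $k_i = K_i d$ and use the identity $[jd]_q = [d]_q \cdot [j]_{q^d}$, which follows from $\frac{1-q^{jd}}{1-q} = \frac{1-q^d}{1-q} \cdot \frac{1-(q^d)^j}{1-q^d}$. Grouping the factors of $[Nd]_q!$ into those indexed by multiples of $d$ and those not, I obtain
\[
[Nd]_q! = [d]_q^{\,N}\, [N]_{q^d}! \cdot \prod_{\substack{1 \leq k \leq Nd \\ d \nmid k}} [k]_q,
\]
and similarly for each $[K_i d]_q!$. Dividing gives the $q$-multinomial as a product of three factors: $[d]_q^{\,N - \sum K_i} = 1$, the $q^d$-multinomial $\left[\begin{smallmatrix} N \\ K_1, \ldots, K_m \end{smallmatrix}\right]_{q^d}$, and a ratio of the ``non-multiple-of-$d$'' factorial pieces.

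Finally, I would evaluate at $q = \omega_d$. The middle factor becomes $\left[\begin{smallmatrix} N \\ K_1, \ldots, K_m \end{smallmatrix}\right]_1 = \binom{n/d}{k_1/d, \ldots, k_m/d}$. For the residual ratio, I use that $[k]_{\omega_d}$ depends only on $k \bmod d$: among $k \in [1, Nd]$ with $k \equiv r \pmod{d}$ (for each $r \in \{1,\dots,d-1\}$) there are exactly $N$ values, so the numerator of the ratio equals $\prod_{r=1}^{d-1} [r]_{\omega_d}^{\,N}$, and likewise the denominator equals $\prod_{r=1}^{d-1} [r]_{\omega_d}^{\,\sum K_i} = \prod_{r=1}^{d-1} [r]_{\omega_d}^{\,N}$, so the ratio is $1$. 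Since each $[r]_{\omega_d}$ is a nonzero complex number for $1 \leq r < d$, no cancellation issues arise. The main bookkeeping hurdle is checking that these residue counts match in numerator and denominator, but this is automatic from $\sum K_i = N$.
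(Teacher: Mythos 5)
Your proof is correct. Note, however, that the paper does not actually prove this proposition at all --- it is quoted without proof from Reiner--Stanton--White \cite{RSW}, so there is no in-paper argument to compare against. Your argument is the standard one and is complete: the cyclotomic valuation $\lfloor n/d\rfloor-\sum_i\lfloor k_i/d\rfloor$ at $\Phi_d$ correctly handles the vanishing case, and the factorization $[jd]_q=[d]_q[j]_{q^d}$ together with the residue-class count (exactly $N$ values of $k\in[1,Nd]$ in each nonzero residue class mod $d$, versus $\sum_i K_i=N$ in the denominator) correctly reduces the nonvanishing case to the ordinary multinomial coefficient. The only point worth making explicit is that the evaluation of the three-factor product at $q=\omega_d$ is legitimate because each residual factor $[r]_{\omega_d}$ with $1\le r\le d-1$ is nonzero, which you do address.
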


\subsection{The Eulerian quasisymmetric functions} \label{Eulerian}

Given a permutation $\sigma \in S_n$, we write $\sigma$ in one line notation,
\[
\sigma=\sigma_1\ldots\sigma_n,
\]
where $\sigma_i=i\sigma$.  Set
\[
[\ov{n}]:=\{\ov{i}:i \in [n]\}, 
\]
and let $w(\sigma)$ be the word in the alphabet ${\mathcal A}:=[n] \cup [\ov{n}]$ obtained from $\sigma$ by replacing $\sigma_i$ with $\ov{\sigma_i}$ whenever $i \in \Exc(\sigma)$.  Order ${\mathcal A}$ by
\[
\ov{1}<\ldots<\ov{n}<1<\ldots<n,
\]
and for any word $w:=w_1\ldots w_n$ from ${\mathcal A}$, set
\[
\Des(w):=\{i \in [n-1]:w_i>w_{i+1}\}.
\]
Now, for $\sigma \in S_n$, define
\[
\dex(\sigma):=\Des(w(\sigma)).
\]
For example, if $\sigma=641532$ then $w(\sigma)=\ov{6}\ov{4}1\ov{5}32$ and $\dex(\sigma)=\{1,3,5\}$.

Recall now that a {\it quasisymmetric function} is a power series (with rational coefficients) $f$ of bounded degree in variables $x_1,x_2,\ldots$ such that if $j_1<\ldots<j_k$ and $l_1<\ldots<l_k$, then for all $a_1,\ldots,a_k$ the coefficients in $f$ of $\prod_{i=1}^{k}x_{j_i}^{a_i}$ and $\prod_{i=1}^{k}x_{l_i}^{a_i}$ are equal.  The usual addition, multiplication and scalar multiplication make the set $\qsym$ of quasisymmetric functions a $\qq$-algebra that strictly contains the algebra of symmetric functions. For $n \in \pp$ and $S \subseteq [n-1]$, set
\[
{\rm Mon}_S:=\{\prod_{i=1}^{n}x_{j_i}:j_i \geq j_{i+1} \mbox{ for all } i \in [n-1] \mbox{ and } j_i>j_{i+1} \mbox{ for all } i \in S\},
\]
and define the {\it fundamental quasisymmetric function} associated with $S$ to be
\[
F_S:=\sum_{x \in {\rm Mon}_S}x \in \qsym.
\]
Recall from above that we have defined $\cjl$ to be the set of all permutations of cycle type $\lambda$ with $j$ excedances.  The {\it Eulerian quasisymmetric function} associated to the pair $(\lambda,j)$ is
\[
Q_{\lambda,j}:=\sum_{\sigma \in \cjl}F_{\dex(\sigma)} \in \qsym.
\]

The Eulerian quasisymmetric functions were introduced in \cite{ShWa1} as a tool for studying the $(\maj,\exc)$ $q$-analogue of the Eulerian polynomials.  The connection between the Eulerian quasisymmetric functions and the $q$-Eulerian numbers is given in the following proposition. The {\em stable principal specialization} $\Omega$  is a homomorphism from the algebra of quasisymmetric functions $\qsym$ to the algebra of formal power series $\qq[[q]]$ defined by
$\Omega(x_i) = q^{i-1}$. 
 
 \begin{prop}[{\cite[Equation (2.13)] {ShWa1}}] {\footnote{Equation~2.13 in \cite{ShWa1} has an extra factor of $q^{j}$ because $a_{\lambda,j} (q)$ is defined there to be the $\maj$ enumerator of $\cjl$ rather than the   $\maj-\exc$ enumerator.}} \label{stabprop}For all partitions $\lambda$ of $n$ and $j\in\{0,1,\dots, n-1\}$, let $a_{\lambda,j}(q)$ be as in (\ref{defqeuler}).  Then
$$\Omega Q_{\lambda,j} = a_{\lambda,j} (q) / \prod_{i=1}^n (1-q^i). $$ 
 \end{prop}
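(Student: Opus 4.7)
The plan is to reduce the proposition to two ingredients: the standard formula for the stable principal specialization of a fundamental quasisymmetric function, and a combinatorial identity equating $\sum_{i \in \dex(\sigma)} i$ with $\maj(\sigma) - \exc(\sigma)$. Together with the linearity of $\Omega$, these yield the result.

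\smallskip
\emph{Step 1.} For any $S \subseteq [n-1]$, I would first establish
\[
\Omega(F_S) = \frac{q^{\sum_{i \in S} i}}{\prod_{i=1}^n (1-q^i)}.
\]
This is the standard $P$-partitions computation. An element of ${\rm Mon}_S$ is a weakly decreasing sequence $j_1 \geq \cdots \geq j_n \geq 1$ with strict decrease at positions in $S$. Reparameterizing by the gaps $c_k := j_k - j_{k+1}$ (with $j_{n+1} := 1$), so that $c_k \geq 0$ and $c_k \geq 1$ for $k \in S$, a telescoping sum converts $\sum_k (j_k - 1)$ into $\sum_k k c_k$. Pulling out the forced $+1$'s and summing the resulting geometric series in each $c_k$ yields the claimed formula.

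\smallskip
\emph{Step 2.} I claim $\sum_{i \in \dex(\sigma)} i = \maj(\sigma) - \exc(\sigma)$ for every $\sigma \in S_n$. Let $\chi_i = 1$ if $i \in \Exc(\sigma)$ and $\chi_i = 0$ otherwise. A four-case check against the ordering $\ov{1} < \cdots < \ov{n} < 1 < \cdots < n$ of the barred/unbarred alphabet, combined with the elementary facts that $(\chi_i,\chi_{i+1}) = (0,1)$ forces $\sigma_i < \sigma_{i+1}$ while $(\chi_i,\chi_{i+1}) = (1,0)$ forces $\sigma_i > \sigma_{i+1}$ (by injectivity of $\sigma$), shows
\[
\dex(\sigma) \setminus \Des(\sigma) = \{i : (\chi_i,\chi_{i+1}) = (0,1)\}, \quad \Des(\sigma) \setminus \dex(\sigma) = \{i : (\chi_i,\chi_{i+1}) = (1,0)\}.
\]
Now group $\Exc(\sigma)$ into its maximal runs of consecutive integers; using the boundary convention $\chi_0 = 0$, each run $[a,b]$ contributes $(a-1) - b = -(b-a+1)$ to $\sum_{i \in \dex(\sigma)} i - \sum_{i \in \Des(\sigma)} i$, and telescoping over all runs gives $-\exc(\sigma)$, which rearranges to the claim.

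\smallskip
\emph{Assembly.} By linearity of $\Omega$,
\[
\Omega Q_{\lambda,j} = \sum_{\sigma \in \cjl} \Omega(F_{\dex(\sigma)}) = \frac{1}{\prod_{i=1}^n (1-q^i)} \sum_{\sigma \in \cjl} q^{\maj(\sigma) - \exc(\sigma)} = \frac{a_{\lambda,j}(q)}{\prod_{i=1}^n (1-q^i)}.
\]
Step 1 is routine. The main obstacle is Step 2, and within it the only subtlety is properly handling excedance runs that start at position $1$; the $\chi_0 = 0$ convention handles this cleanly.
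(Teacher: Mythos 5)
Your proof is correct. Note that the paper itself does not prove this proposition --- it is imported by citation from \cite{ShWa1} (with a footnote reconciling the factor $q^j$), and your two ingredients, the $P$-partition computation $\Omega(F_S)=q^{\sum_{i\in S}i}/\prod_{i=1}^n(1-q^i)$ and the identity $\sum_{i\in\dex(\sigma)}i=\maj(\sigma)-\exc(\sigma)$, are exactly the content of the corresponding lemmas in that reference, so your argument is essentially the standard one rather than a genuinely different route. Your handling of the two key cases ($(\chi_i,\chi_{i+1})=(0,1)$ forcing a $\dex$-descent but not a $\Des$-descent, and $(1,0)$ the reverse) and the telescoping over maximal excedance runs, with the boundary term at $i=0$ contributing nothing, is sound.
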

 
In \cite{ShWa1}, it is also shown that in fact $Q_{\lambda,j}$ is always a symmetric function. 
 If one knows $Q_{(n),j}$ for all $n,j$, then a fairly compact  explicit formula for each $Q_{\lambda,j}$ can be obtained from  Corollary 6.1 of \cite{ShWa1}, which says that for any $\lambda \in \pr(n)$,
\begin{equation} \label{ple}
\sum_{j=0}^{n-1}Q_{\lambda,j}t^j=\prod_{i=1}^{n}\h_{m_i(\lambda)}\left[\sum_{l=0}^{i-1}Q_{(i),l}t^l\right].
\end{equation}

As noted in \cite{ShWa1}, if we set 
\[
Q_{n,j}:=\sum_{\lambda \in \pr(n)}Q_{\lambda,j}
\] for $n \ge 1$, and $$Q_{0,0}=Q_{(0),0}=1,$$ equation (\ref{ple}) implies
$$
\sum_{n,j \geq 0}Q_{n,j}t^j=\sum_{n \geq 0}\h_n\left[\sum_{i,j \geq 0}Q_{(i),j}t^j\right]=\H\left[\sum_{i,j \geq 0}Q_{(i),j}t^j\right],
$$
which by (\ref{plinv}) is equivalent to 
\begin{equation} \label{ple2}
\sum_{n,j \geq 0}Q_{(n),j}t^j=\LL\left[\sum_{i,j \geq 0}Q_{i,j}t^j\right].
\end{equation}
Proposition 6.6 of \cite{ShWa1} gives an explicit formula for $Q_{n,j}$ in terms of the power sum symmetric function basis,
\begin{equation} \label{qnj}
\sum_{j=0}^{n-1}Q_{n,j}t^j=\sum_{\nu \in \pr(n)}z_\nu^{-1}A_{l(\nu)}(t)\prod_{i=1}^{l(\nu)}[\nu_i]_t\p_\nu.
\end{equation}
By combining  (\ref{ple}), (\ref{ple2}) and (\ref{qnj}) we obtain a formula for each $Q_{\lambda,j}$, which will be used in Section~\ref{psumsec}  to prove a conjecture from \cite{ShWa1} giving the expansion of $Q_{(n),j}$ in the power sum basis.

\section{A symmetric function technique} \label{symsec}

We describe here a general technique for evaluating polynomials at roots of unity based on a technique of D\'esarm\'enien \cite{des}.  This technique provides a key step in our proof of Theorem~\ref{main1}.   One can also prove Theorem 1.1 using Springer's theory of regular elements in place of the technique we give here.  A description of the relevance of Springer's work to the cyclic sieving phenomenon appears in [RSWh].

 Given a homogeneous symmetric function $F$ of degree $n$ and a partition $\nu$ of $n$, let 
$\chi^F_\nu$ be the coefficient of $z_\nu^{-1} p_\nu$ in the expansion of $F$ in terms of the basis 
$\{z^{-1}_\nu p_\nu : \nu \in \pr( n)\}$ for the space of homogeneous symmetric functions of degree $n$.   That is, $ \chi^F_\nu$ is uniquely determined by
$$F = \sum_{\nu\in \pr(n) } \chi^F_\nu z^{-1}_\nu p_\nu.$$ 
Although we will not make use of this, we note that if $F$ is the Frobeneous characteristic of a class function of $S_n$ then $\chi^F_\nu$ is the value of the class function on permutations of cycle type $\nu$.  Recall that $\Omega$ denotes the stable principal specialization defined in Section~\ref{Eulerian}.

The following result is implicit in \cite{des}.
\begin{prop} \label{thdes} Suppose $f(q) \in \qq[q]$ and there exists  a homogeneous symmetric function $F$ of degree $n$ with coefficients in $\qq$ such that   
 $$f(q) =\prod_{i=1}^n (1-q^i)\,\, \Omega F.$$ Then for all $d,k \in \pp$ such that $n \in \{dk,dk+1\} $, 
 $$f(\omega_d) = \chi^F_\nu,$$
 where $\nu = d^{k}$ or   $\nu=1d^{k}$. 
\end{prop}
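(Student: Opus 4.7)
The plan is to expand $F$ in the power sum basis as $F = \sum_{\mu \in \pr(n)} \chi^F_\mu \, z_\mu^{-1} \, \p_\mu$, apply $\Omega$ using the standard evaluation $\Omega \p_r = 1/(1-q^r)$, and multiply through by $\prod_{i=1}^n(1-q^i)$ to rewrite the defining identity as
\[
f(q) = \sum_{\mu \in \pr(n)} \chi^F_\mu \, z_\mu^{-1} \, R_\mu(q), \qquad R_\mu(q) := \frac{\prod_{i=1}^n (1-q^i)}{\prod_{j}(1-q^{\mu_j})}.
\]
Each $R_\mu$ is a priori only a rational function, so the first task is to show it is regular at $q = \omega_d$: the numerator has one simple zero for each $i \in [n]$ with $d \mid i$, giving $\lfloor n/d \rfloor = k$ zeros, while the denominator contributes one simple zero for each part $\mu_j$ with $d \mid \mu_j$. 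Since such parts are each $\geq d$ and sum to at most $n$, there are at most $\lfloor n/d \rfloor = k$ of them, so the quotient is regular at $\omega_d$.

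The key step is then to identify which $\mu$ contribute nontrivially upon evaluation at $\omega_d$. A nonzero value of $R_\mu(\omega_d)$ requires the two counts above to agree, i.e.\ $\mu$ must have exactly $k$ parts divisible by $d$. These parts sum to a multiple of $d$ lying in $[dk,\, n]$. In the hypothesized regime $n \in \{dk, dk+1\}$, the only multiple of $d$ in this interval is $dk$ itself, so each such part must equal $d$ and any remaining parts must sum to $n - dk \in \{0,1\}$. This forces $\mu = d^k$ when $n = dk$ and $\mu = 1 d^k$ when $n = dk+1$; in either case $\mu = \nu$.

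Finally I would evaluate $R_\nu(\omega_d)$ directly. The block of multiples-of-$d$ factors in the numerator matches the denominator exactly in order, and the substitution $u = q^d$ reduces $\lim_{q\to\omega_d}\prod_{j=1}^k(1-q^{jd})/(1-q^d)^k$ to the standard limit $\lim_{u\to 1}\prod_{j=1}^k(1-u^j)/(1-u)^k = k!$. The remaining factors $\prod_{d \nmid i}(1-\omega_d^i)$ are evaluated using $\prod_{j=1}^{d-1}(1-\omega_d^j) = d$ together with a count of how often each nonzero residue modulo $d$ appears among the relevant indices; in the $n = dk+1$ case the extra appearance of residue $1$ (from $i = dk+1$) is absorbed by the $(1-q)$ coming from the unique part equal to $1$ in $\nu = 1 d^k$. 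Either way one obtains $R_\nu(\omega_d) = d^k k! = z_\nu$, whence the sum collapses to $\chi^F_\nu z_\nu^{-1} \cdot z_\nu = \chi^F_\nu$.

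The main obstacle is the counting argument that singles out $\mu = \nu$, since this is precisely where the hypothesis $n \in \{dk, dk+1\}$ is critical: once $n - dk \geq 2$, several partitions of $n$ have $k$ parts divisible by $d$ and no single coefficient $\chi^F_\mu$ is picked out by the specialization. The residue computation for $R_\nu(\omega_d)$ is mechanical but requires slight care to handle the $d = 1$ case (in which the ``non-multiples'' product is vacuous) uniformly with the $d \geq 2$ case.
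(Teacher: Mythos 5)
Your argument is correct, and its skeleton---expand $F$ in the basis $\{z_\mu^{-1}\p_\mu\}$, apply $\Omega\p_\mu=\prod_i(1-q^{\mu_i})^{-1}$, and reduce everything to evaluating $T_\mu(q)=\prod_{i=1}^n(1-q^i)/\prod_j(1-q^{\mu_j})$ at $\omega_d$---is the same as the paper's, which rests on the identity (\ref{deseq}) borrowed from D\'esarm\'enien. Where you genuinely differ is in how you establish that identity. The paper factors $T_\mu(q)$ as a $q$-multinomial coefficient times $\prod_i\prod_{j=1}^{\mu_i-1}(1-q^j)$, which exhibits polynomiality, and then invokes Proposition \ref{multinom} (the root-of-unity evaluation of Gaussian multinomials), together with the $q$-Pascal recurrence (\ref{pascal}) to handle $n=dk+1$. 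You instead compare orders of vanishing of numerator and denominator at $q=\omega_d$: this gives regularity directly, shows that a nonzero value forces $\mu$ to have exactly $k$ parts divisible by $d$ (whence $\mu=\nu$, using $n\in\{dk,dk+1\}$), and yields the value via $\lim_{u\to1}\prod_{j=1}^k(1-u^j)/(1-u)^k=k!$ and $\prod_{j=1}^{d-1}(1-\omega_d^j)=d$. Your route is more self-contained and arguably more transparent; the paper's reuses Proposition \ref{multinom}, which it needs again in Section \ref{finsec}. One caution beyond the one you flagged: when $d=1$ and $n=dk+1$, your count $\lfloor n/d\rfloor=k$ is off by one (it equals $k+1$), so both the assertion ``exactly $k$ parts divisible by $d$'' and the claim that $dk$ is the only multiple of $d$ in $[dk,n]$ need the corrected count in that case; with it one gets $\mu=1^n=\nu$ and $R_\nu(1)=n!=z_\nu$, so the conclusion stands.
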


\begin{proof} By expanding $F$ in the power sum basis for the symmetric functions, we have, 
\begin{eqnarray} \nonumber f(q) &=& \prod_{i=1}^n (1-q^i) \sum_{\mu \in \pr(n)}  \chi^F_\mu z^{-1}_\mu\Omega \p_\mu \\ \label{desres}
 &=& \sum_{\mu \in \pr(n)} \chi^F_\mu  z^{-1}_\mu \,\, \frac{\prod_{i=1}^n (1-q^i)} {\prod_{i=1}^{l(\mu)} (1-q^{\mu_i} )}.
 \end{eqnarray}

It is shown in  \cite[Proposition 7.2]{des} that for all $\mu \in \pr(n)$,
$$T_\mu(q):= \frac{\prod_{i=1}^n (1-q^i)} {\prod_{i=1}^{l(\mu)} (1-q^{\mu_i} )}$$ is a polynomial in $q$ whose value at $\omega_d$ is given by
\begin{equation} \label{deseq} 
T_\mu(\omega_d) = \begin{cases}  z_\mu &\mbox{if } \mu = d^k  \mbox{ or } \mu = 1 d^k\\
 0 &\mbox{otherwise.} \end{cases} 
 \end{equation}
We  include a proof for the sake of completeness.  Since  $$T_\mu(q)=  \left [ \begin{array}{c} n \\ \mu_1,\dots ,\mu_{l(\mu)} \end{array} \right] _q \,\, \prod_{i=1}^{l(\mu)} \prod_{j=1}^{\mu_i-1}(1-q^j),$$ we see that $T_\mu(q)$ is a polynomial and  that if $T_\mu(\omega_d) \ne 0$ then $\mu_i \le d$ for all $i$.   Hence, in the case that $n=dk$,  it follows from 
 Proposition~\ref{multinom} that  $T_\mu(\omega_d) \ne 0$ only if $\mu_i = d$ for all $i$.  By Proposition~\ref{multinom}, 
 $$T_{d^k} (\omega_d) = k! (\prod_{j =1}^{d-1}(1-\omega_d^j))^k = k!d^k.$$  
 Similarly, in the case that $n=dk+1$, we use   (\ref{pascal})
and 
 Proposition~\ref{multinom} to show that that $T_\mu(\omega_d)$ equals $k!d^k$ if $\mu = 1 d^k$ and is $0$ otherwise.  Hence, in either case,
  (\ref{deseq}) holds.  Now by  plugging (\ref{deseq}) into (\ref{desres}) we obtain the desired result.
\end{proof}

We will use  Propostion~\ref{thdes} to  evaluate the cycle-type Eulerian numbers 
$a_{\lambda,j}(q)$ at all the $m^{th}$ roots of unity, where $m \in \{n-1,n\}$.    We see from Proposition~\ref{stabprop} that we already have the required symmetric function, namely $Q_{\lambda,j}$.  We thus obtain the  first   step in our proof of Theorem~\ref{main1}.
\begin{prop} \label{evalth}
Let $\lambda \in \pr( n)$ and let $d,k \in \pp$.  If $dk = n$ then
$$a_{\lambda,j}(\omega_d) = \chi^{Q_{\lambda,j}}_{d^k},$$
and if $dk =n-1$ then 
$$a_{\lambda,j}(\omega_d) = \chi^{Q_{\lambda,j}}_{1d^k}.$$
\end{prop}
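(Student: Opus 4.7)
The plan is to derive Proposition~\ref{evalth} as a direct combination of the two main ingredients just assembled: Proposition~\ref{stabprop} and Proposition~\ref{thdes}. First I would rewrite Proposition~\ref{stabprop} in the equivalent form
\[
a_{\lambda,j}(q) \;=\; \prod_{i=1}^{n}(1-q^i)\,\Omega Q_{\lambda,j},
\]
which is precisely the shape of the hypothesis of Proposition~\ref{thdes}, with $f(q) = a_{\lambda,j}(q)$ and $F = Q_{\lambda,j}$.

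Before invoking Proposition~\ref{thdes} I would verify that $Q_{\lambda,j}$ satisfies its hypotheses, namely that it is a homogeneous symmetric function of degree $n$ with rational coefficients. Homogeneity of degree $n$ and (non-negative) integrality of the coefficients are immediate from the definition $Q_{\lambda,j} = \sum_{\sigma \in \cjl} F_{\dex(\sigma)}$, since each fundamental quasisymmetric function $F_{\dex(\sigma)}$ with $\sigma \in S_n$ is itself homogeneous of degree $n$ with coefficients in $\nn$. The only nonobvious point is that $Q_{\lambda,j}$ is in fact symmetric; this is not apparent from the definition, but it was established in \cite{ShWa1} and recalled in Section~\ref{Eulerian}, so I may cite it.

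With these checks in place, the proposition follows by directly applying Proposition~\ref{thdes} to $f = a_{\lambda,j}$ and $F = Q_{\lambda,j}$: in the case $n = dk$ the conclusion reads $a_{\lambda,j}(\omega_d) = \chi^{Q_{\lambda,j}}_{d^k}$, and in the case $n = dk+1$ it reads $a_{\lambda,j}(\omega_d) = \chi^{Q_{\lambda,j}}_{1 d^k}$, matching the two cases claimed. There is essentially no obstacle; all of the substance has already been packaged into Proposition~\ref{stabprop} (the stable-principal-specialization identity from \cite{ShWa1}) and Proposition~\ref{thdes} (D\'esarm\'enien's evaluation trick), so Proposition~\ref{evalth} is the clean corollary that results from feeding $Q_{\lambda,j}$ into that machinery.
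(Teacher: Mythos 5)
Your proposal is correct and is exactly the paper's argument: the authors likewise obtain Proposition~\ref{evalth} by feeding the identity $a_{\lambda,j}(q)=\prod_{i=1}^n(1-q^i)\,\Omega Q_{\lambda,j}$ from Proposition~\ref{stabprop} into Proposition~\ref{thdes}, using the fact (recalled from \cite{ShWa1}) that $Q_{\lambda,j}$ is a homogeneous symmetric function of degree $n$. No differences worth noting.
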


In \cite{ShWa1} a formula for the coefficients $\chi^{Q_{(n),j}}_{\nu}$ is conjectured.   This formula turns out to be just what we need to prove Theorem~\ref{main1}.  In the next section we present the conjecture and its proof. 

\begin{remark} In \cite{ShWa1} it is conjectured that $Q_{\lambda,j}$ is the Frobenius characteristic of some representation of  $S_n$.  By Proposition~\ref{evalth} and Theorem~\ref{main1},  the restriction of the conjectured representation  to $G_n$ would necessarily  be isomorphic to the permutation representation for the action of  $G_n$  on $S_{\lambda,j}$.  \end{remark}

\section{The expansion of $Q_{(n),j} $} \label{psumsec}

In this section we present a key result of our paper (Theorem \ref{psum}), which was conjectured in \cite{ShWa1}.  For a power series $f(t)=\sum_{j \geq 0}a_jt^j$ and an integer $k$, let $f(t)_k$ be the power series obtained from $f(t)$ by erasing all terms $a_jt^j$ such that $\gcd(j,k) \neq 1$, so
\[
f(t)_k:=\sum_{\gcd(j,k)=1}a_jt^j.
\]
For example, if $f(t)=t+3t^2-5t^3+7t^4$ then $f(t)_2=t-5t^3$.

For a partition $\nu=(\nu_1,\ldots,\nu_k)$, set 
\[
g(\nu):=\gcd(\nu_1,\ldots,\nu_k).
\]

\begin{thm}[\cite{ShWa1}, Conjecture 6.5] \label{psum}
For $\nu=(\nu_1,\ldots,\nu_k) \in \pr(n)$, set
\[
G_\nu(t):=\left(tA_{k-1}(t)\prod_{i=1}^{k}[\nu_i]_t\right)_{g(\nu)}.
\]
Then
\begin{equation} \label{peq}
\sum_{j=0}^{n-1}Q_{(n),j}t^j=\sum_{\nu \in \pr(n)}z_\nu^{-1} G_\nu(t)\p_\nu.
\end{equation}
\end{thm}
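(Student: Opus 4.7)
The plan is to compute $\LL[\hat R(t)]$ directly, where $\hat R(t):=\sum_{n\ge 1}\sum_j Q_{n,j}t^j$; by~(\ref{ple2}) this equals $\sum_{n\ge 1}\sum_j Q_{(n),j}t^j$, the generating series we wish to expand. Formula~(\ref{qnj}) reads $\hat R(t)=\sum_{\nu\ne\emptyset} z_\nu^{-1} A_{l(\nu)}(t)\prod_i [\nu_i]_t\,\p_\nu$, and a multinomial expansion rewrites $R(t):=1+\hat R(t)$ as $\sum_{k\ge 0} A_k(t)\,U^k/k!$ where $U:=\sum_{i\ge 1}[i]_t\,\p_i/i$. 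Combining this with the classical exponential generating function $\sum_k A_k(t) z^k/k!=(1-t)/(e^{z(t-1)}-t)$ yields the closed form $R(t)=1/(1-\phi)$ with $\phi:=(e^{U(t-1)}-1)/(t-1)$.

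The heart of the argument is to put $T(t):=\log R(t)$ in closed form. Writing $V:=U(t-1)$ and factoring $e^V-t=e^V(1-te^{-V})$, one gets $T(t)=\log(1-t)-V+\sum_{k\ge 1}t^k e^{-kV}/k$. Expanding $e^{-kV}$, interchanging the $k$- and $V$-sums, and invoking the Worpitzky--Carlitz identity $\sum_{k\ge 1} k^{m-1}t^k=tA_{m-1}(t)/(1-t)^m$, the denominator $(1-t)^m$ precisely cancels the factor $(t-1)^m$ inside $V^m=U^m(t-1)^m$. After routine simplification one arrives at
$$T(t)=U+\sum_{m\ge 2}\frac{tA_{m-1}(t)}{m!}\,U^m.$$
A multinomial expansion of each $U^m/m!$ in the $\p$-basis yields $T(t)=\sum_{\nu\ne\emptyset} z_\nu^{-1} c_\nu(t)\,\p_\nu$ with $c_\nu(t)=tA_{l(\nu)-1}(t)\prod_i[\nu_i]_t$ for $l(\nu)\ge 2$ and $c_{(n)}(t)=[n]_t$. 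Extracting the Eulerian prefactor $tA_{m-1}(t)$ from $\log R(t)$ via Worpitzky--Carlitz is the main technical step and is precisely what powers the formula in Theorem~\ref{psum}.

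Finally, since $\p_d$ is a ring homomorphism commuting with $\log$, $\log(1+\p_d[\hat R])=\p_d[T]$, so $\LL[\hat R]=\sum_{d\ge 1}(\mu(d)/d)\,\p_d[T]$. Extracting the coefficient of $\p_\mu$ and using $z_\mu=d^{l(\mu)}z_{\mu/d}$ whenever $d\mid g(\mu)$, one obtains
$$z_\mu\,[\p_\mu]\LL[\hat R]=\sum_{d\mid g(\mu)}\mu(d)\,d^{l(\mu)-1}\,c_{\mu/d}(t^d).$$
For $l(\mu)=k\ge 2$, substituting the formula for $c_{\mu/d}(t^d)$, applying Worpitzky again as $t^dA_{k-1}(t^d)/(1-t^d)^k=\sum_{j\ge 1}j^{k-1}t^{dj}$, changing variables $i=dj$, and using $\sum_{d\mid n}\mu(d)=[n=1]$ transforms the sum into $\prod_l(1-t^{\mu_l})\sum_{j:\,\gcd(j,g(\mu))=1}j^{k-1}t^j$. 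Since every $\mu_l$ is divisible by $g(\mu)$, the polynomial $\prod_l(1-t^{\mu_l})$ lies in $\qq[t^{g(\mu)}]$, so multiplication by it commutes with the truncation $(\cdot)_{g(\mu)}$; the result is thus $(tA_{k-1}(t)\prod_l[\mu_l]_t)_{g(\mu)}=G_\mu(t)$, as required. The $l(\mu)=1$ case reduces to a short direct M\"obius computation establishing $\sum_{d\mid n}\mu(d)[n/d]_{t^d}=G_{(n)}(t)$ for $n\ge 2$.
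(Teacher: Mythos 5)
Your proof is correct, and its skeleton coincides with the paper's: both start from (\ref{ple2}) and (\ref{qnj}), collapse the power-sum expansion into powers of $U=\sum_{i\ge 1}[i]_t\,\p_i/i$ by the multinomial theorem (this is (\ref{mceq}) with $d=1$), compute the plethystic logarithm term by term, and finish with the same Worpitzky--Carlitz plus M\"obius-inversion manipulation that the paper isolates as Lemma~\ref{resum}. The one genuine divergence is how $\log\bigl(\sum_{k\ge 0}A_k(t)U^k/k!\bigr)$ is evaluated: the paper does this combinatorially, via the exponential formula applied to cycle decompositions together with the bijection behind $A_{(l)}(t)=tA_{l-1}(t)$ (Lemma~\ref{exforlem}), whereas you substitute the closed-form Eulerian exponential generating function $(1-t)/(e^{z(t-1)}-t)$, expand $\log(1-te^{-V})$, and recover the Eulerian prefactors analytically from $\sum_{k\ge 1}k^{m-1}t^k=tA_{m-1}(t)/(1-t)^m$. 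Both routes are standard; yours avoids a separate combinatorial lemma at the cost of invoking Worpitzky--Carlitz twice. A side benefit of your computation is that it handles the linear term carefully: you obtain $U$ (i.e.\ $c_{(n)}=[n]_t$), whereas Lemma~\ref{exforlem} as stated gives $tA_0(t)z=tz$ for the $z^1$-coefficient of the logarithm even though $A_1(t)=1$ (equivalently, $A_{(1)}(t)=1\ne tA_0(t)$, so (\ref{circeul}) requires $l\ge 2$); your separate M\"obius computation for $l(\mu)=1$, valid for $n\ge 2$, cleanly sidesteps this boundary issue. (At $n=1$ the identity (\ref{peq}) must in any case be read with care, since $Q_{(1),0}=\p_1$ while $G_{(1)}(t)=t$.)
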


Theorem \ref{psum} can be restated as follows.  Since $Q_{(n),j}$ is a homogeneous symmetric function of degree $n$, it 
can be expanded in the  basis $\{z^{-1}_\lambda p_\lambda : \lambda \in \pr(n)\}$.  Thus, the theorem says that the expansion coefficient of $z^{-1}_\nu p_\nu$ is $0$ if $\gcd(j,g(\nu)) \neq 1$, while if $\gcd(j,g(\nu))=1$ then the expansion coefficient  equals  the coefficient of $t^j$ in $tA_{l(\nu)-1}(t)\prod_{i=1}^{l(\nu)}[\nu_i]_t$. 

In order to prove Theorem~\ref{psum} we need two lemmas. As above, we write $\mu$ for the classical M\"obius function on $\pp$, and recall that
\begin{equation} \label{minv}
\sum_{d|n}\mu(d)=\left\{\begin{array}{ll} 1 & n=1, \\ 0 & \mbox{otherwise}. \end{array} \right.
\end{equation}

\begin{lemma} \label{resum}
For a partition $\nu=(\nu_1,\ldots,\nu_l)$, we have
\begin{equation} \label{resumeq}
G_\nu(t)=\sum_{d|g(\nu)}\mu(d)d^{l-1}t^dA_{l-1}(t^d)\prod_{i=1}^{l}\left[\frac{\nu_i}{d}\right]_{t^d}.
\end{equation}
\end{lemma}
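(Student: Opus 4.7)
The plan is to prove the lemma by M\"obius inversion: since $\sum_{d \mid m} \mu(d) = [m=1]$, for any formal power series $f(t) = \sum_j a_j t^j$ one has
$$
f(t)_n \;=\; \sum_{\gcd(j,n)=1} a_j t^j \;=\; \sum_{d \mid n} \mu(d)\, E_d f(t),
$$
where $E_d f(t) := \sum_{d \mid j} a_j t^j$ is the ``divisibility projection.'' Applying this with $n = g(\nu)$ and $f(t) = t A_{l-1}(t)\prod_i [\nu_i]_t$ reduces the lemma to evaluating $E_d$ of this product for each $d \mid g(\nu)$ and showing that
$$
E_d\Bigl[\,t A_{l-1}(t) \prod_{i=1}^{l} [\nu_i]_t\Bigr] \;=\; d^{l-1}\, t^d\, A_{l-1}(t^d) \prod_{i=1}^{l} \Bigl[\tfrac{\nu_i}{d}\Bigr]_{t^d}.
$$

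To compute this $E_d$, I would use two facts. First, since $d \mid g(\nu)$ divides every $\nu_i$, the $q$-integer factors split as $[\nu_i]_t = [d]_t \cdot [\nu_i/d]_{t^d}$, and then
$$
t A_{l-1}(t) \prod_{i=1}^{l} [\nu_i]_t \;=\; \frac{t A_{l-1}(t)}{(1-t)^l} \cdot (1-t^d)^l \cdot \prod_{i=1}^{l} \Bigl[\tfrac{\nu_i}{d}\Bigr]_{t^d},
$$
and the last two factors depend only on $t^d$, hence are fixed by $E_d$. Second, the classical Frobenius/Worpitzky identity
$$
\frac{t A_{l-1}(t)}{(1-t)^l} \;=\; \sum_{m \geq 1} m^{l-1}\, t^m
$$
makes the projection transparent: $E_d$ of the right side is $\sum_{m' \geq 1} (dm')^{l-1} (t^d)^{m'} = d^{l-1}\, t^d A_{l-1}(t^d)/(1-t^d)^l$. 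Multiplying by the $t^d$-only factors pulled out above gives the displayed formula for $E_d$, and then summing over $d \mid g(\nu)$ weighted by $\mu(d)$ yields the lemma.

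There is essentially no obstacle here, as the argument is mechanical once the two observations (the splitting $[\nu_i]_t = [d]_t[\nu_i/d]_{t^d}$ and the Worpitzky identity) are combined; the only point requiring a small check is that the rational function $tA_{l-1}(t)/(1-t)^l$ is a genuine power series in $t$, so that the projection $E_d$ and its manipulation as a series make sense. All of this is visibly purely formal in $t$.
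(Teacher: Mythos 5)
Your proof is correct and uses essentially the same ingredients as the paper's: M\"obius inversion of the coprimality filter via $\sum_{d\mid m}\mu(d)$, the expansion $tA_{l-1}(t)/(1-t)^l=\sum_{m\ge 1}m^{l-1}t^m$, and the commutation of the filtering operator with multiplication by power series in $t^d$. The only difference is organizational --- you invert the operator $(\cdot)_{g(\nu)}$ into the projections $E_d$ first and then evaluate each $E_d$, whereas the paper first rewrites $G_\nu(t)$ as $\sum_{\gcd(j,g(\nu))=1}j^{l-1}t^j\prod_{i}(1-t^{\nu_i})$ and applies M\"obius inversion afterwards --- so this is in substance the same proof.
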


\begin{proof}
It is known (and follows, for example, from \cite[Theorem 4.5.14]{Sta1}) that for any positive integer $k$ we have
\begin{equation} \label{euleq}
\frac{tA_{k-1}(t)}{(1-t)^k}=\sum_{j \geq 1}j^{k-1}t^j.
\end{equation}

It follows directly from the definition of $f(t)_d$ that for any power series $g,h$ and any $d \in \pp$ we have
\begin{equation} \label{powid}
\left(g(t)h(t^d)\right)_d=g(t)_dh(t^d).
\end{equation}

We see now that
\begin{eqnarray*}
G_\nu(t) & = & \left(tA_{l-1}(t)\prod_{i=1}^{l}\frac{1-t^{\nu_i}}{1-t}\right)_{g(\nu)} \\ & = & \left(\frac{tA_{l-1}(t)}{(1-t)^l}\prod_{i=1}^{l}(1-t^{\nu_i})\right)_{g(\nu)} \\ & = & \left(\sum_{j \geq 1}j^{l-1}t^j\right)_{g(\nu)}\prod_{i=1}^{l}(1-t^{\nu_i}) \\ & = & \sum_{j:\gcd(g(\nu),j)=1}j^{l-1}t^j\prod_{i=1}^{l}(1-t^{\nu_i}),
\end{eqnarray*}
the third equality above following from (\ref{euleq}) and (\ref{powid}).

Now
\begin{eqnarray*}
\sum_{d|g(\nu)}\mu(d)\sum_{a \geq 1}(ad)^{l-1}t^{ad} & = & \sum_{j \geq 1}j^{l-1}t^j\sum_{d|\gcd(j,g(\nu))}\mu(d) \\ & = & \sum_{j:\gcd(j,g(\nu))=1}j^{l-1}t^j,
\end{eqnarray*}
the second equality following from (\ref{minv}).  We see now that
\begin{eqnarray*}
G_\nu(t) & = & \left(\sum_{d|g(\nu)}\mu(d)\sum_{a \geq 1}(ad)^{l-1}t^{ad}\right)\prod_{i=1}^{l}(1-t^{\nu_i}) \\ & = & \left(\sum_{d|g(\nu)}\mu(d)d^{l-1}\frac{t^dA_{l-1}(t^d)}{(1-t^d)^l}\right)\prod_{i=1}^{l}(1-t^{\nu_i}) \\ & = & \sum_{d|g(\nu)}\mu(d)d^{l-1}t^dA_{l-1}(t^d)\prod_{i=1}^{l}\frac{1-t^{\nu_i}}{1-t^d},
\end{eqnarray*}
the second equality following from (\ref{euleq}).
\end{proof}

\begin{lemma} \label{exforlem}
We have
\begin{equation} \label{exfor}
\sum_{k \geq 0}\frac{A_k(t)}{k!}z^k=\exp\left(\sum_{l \geq 1}\frac{tA_{l-1}(t)}{l!}z^l\right).
\end{equation}
\end{lemma}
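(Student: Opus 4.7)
The plan is to prove~\eqref{exfor} via the classical exponential formula for exponential generating functions, applied to the cycle decomposition of permutations. The right-hand side, after taking logarithms, should be identified as the EGF of single cycles graded by an excedance-type statistic, and the identity will then express that this statistic is additive over the cycles of a permutation.

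\emph{Step 1: cycle enumeration.} For each $l \ge 2$, I will verify that
\[
\sum_{c} t^{\exc(c)} \;=\; t\, A_{l-1}(t),
\]
with $c$ ranging over the $(l-1)!$ single $l$-cycles in $S_l$. Writing each such $c$ canonically as $(1, c_2, \ldots, c_l)$ with its smallest entry first, the position $1$ is always an excedance (since $c_2 > 1$), the position $c_l$ is never one (its image is $1$), and each interior position $c_j$, $2 \le j \le l-1$, is an excedance iff $c_{j+1} > c_j$, i.e.\ iff $j$ is an ascent of the word $c_2 c_3 \cdots c_l$. As $c_2 \cdots c_l$ ranges over all orderings of $\{2, \ldots, l\}$, the ascent statistic is distributed as $A_{l-1}(t)$ (ascents and descents being equidistributed), which together with the automatic $t$ from position $1$ yields $tA_{l-1}(t)$.

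\emph{Step 2: exponential formula.} Since every $\sigma \in S_n$ decomposes uniquely into disjoint cycles on the blocks of a set partition of $[n]$ and $\exc$ is additive over this decomposition, the classical exponential formula converts Step~1 into the EGF identity~\eqref{exfor}.

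\emph{Main obstacle.} The delicate point is the boundary case $l = 1$: a lone fixed point has $\exc = 0$, whereas the formula $tA_0(t) = t$ nominally assigns it weight $t$. I plan to reconcile this by first proving the analogous identity with the weak-excedance statistic $\mathrm{wexc} := \exc + \fix$ in place of $\exc$ on the left (so that each singleton cycle genuinely contributes $t$ and Step~1 applies uniformly for all $l \ge 1$), and then passing through the bijection $\sigma \mapsto \sigma^{-1}$ together with the palindromic symmetry $A_n(t) = t^{n-1} A_n(1/t)$ of the Eulerian polynomial to translate back to the form on the left of~\eqref{exfor}. As a safety net, one may verify~\eqref{exfor} analytically by logarithmically differentiating both sides and matching the result against the classical closed form $\sum_k A_k(t) z^k/k! = (1-t)/(e^{(t-1)z} - t)$, which reduces the identity to a first-order ODE that is immediate.
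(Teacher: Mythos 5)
Your Steps 1 and 2 reproduce the paper's own argument: the paper likewise applies the exponential formula to the cycle decomposition and computes the single-cycle generating function $tA_{l-1}(t)$ by linearizing the cycle (it anchors the cycle at its largest element and reads off $\exc = 1 + {\rm asc}$ of the remaining word of length $l-1$; you anchor at the smallest element, which is the same computation). That part of your proposal is correct and is exactly the intended proof.

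The ``main obstacle'' you isolate is, however, fatal rather than delicate, and your proposed repair cannot succeed. The exponential formula gives $\sum_{k\ge 0}\frac{A_k(t)}{k!}z^k = \exp\bigl(\sum_{l\ge 1}\frac{A_{(l)}(t)}{l!}z^l\bigr)$, where $A_{(l)}(t)$ is the excedance generating function over $l$-cycles, and $A_{(1)}(t)=1$ while $tA_0(t)=t$; consequently the two sides of \eqref{exfor} already disagree in the coefficient of $z^1$, where the left side gives $A_1(t)=1$ and the right side gives $t$. In closed form the right side of \eqref{exfor} equals $(1-t)/(1-te^{(1-t)z})$, which is $e^{(t-1)z}$ times the left side; the identity that is actually true is $\exp\bigl(\sum_{l\ge 1}\frac{tA_{l-1}(t)}{l!}z^l\bigr)=1-t+t\sum_{k\ge 0}\frac{A_k(t)}{k!}z^k$. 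Your weak-excedance detour proves precisely this corrected identity, since $\sum_{\sigma\in S_k}t^{{\rm wexc}(\sigma)}=tA_k(t)$ for $k\ge 1$, but the planned translation back via $\sigma\mapsto\sigma^{-1}$ and palindromicity then leaves $tA_k(t)$, not $A_k(t)$, as the $k$-th coefficient, and no bijection will remove that factor of $t$. Your ``safety net'' would likewise expose rather than confirm \eqref{exfor}: the logarithmic derivative of its right side is $t\sum_{k}A_k(t)z^k/k!$, which equals $t$ at $z=0$, whereas that of the left side equals $1$. For what it is worth, the paper's own proof contains the identical gap --- it asserts $A_{(l)}(t)=tA_{l-1}(t)$ for all $l\ge 1$ by an argument that only makes sense for $l\ge 2$ --- so your instinct about $l=1$ was exactly right; the correct conclusion is that the statement requires the correction above, not that a cleverer bijection will rescue it as written.
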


\begin{proof} We apply the exponential formula (see \cite[Corollary 5.1.6]{Sta2}) to the Eulerian polynomials. For any permutation $\sigma$  in $S_n$ let $\pi(\sigma)$ be the partition of the set $[n]$ whose blocks are the supports of the cycles in the cycle decomposition of $\sigma$.   Let $\Pi_n$ be the set of all partitions of the set $[n]$.  For any partition $\pi$ in $\Pi_n$ 
set
$$A_\pi(t) := \sum_{\scriptsize\begin{array}{c} \sigma \in S_n\\ \pi(\sigma) = \pi \end{array}} t^{\exc(\sigma)}.$$
Then $$A_n(t) = \sum_{\pi \in \Pi_n} A_{\pi}(t),$$
and $$A_\pi(t) = \prod_{i=1}^k A_{\{B_i\}}(t) =   \prod_{i=1}^k A_{(|B_i|)}(t),$$
where  $\pi = \{B_1,\dots,B_k\}$.
It therefore follows from the exponential formula that 
$$ \sum_{k\ge 0}  \frac{A_k(t)} {k! } z^k  = \exp (  \sum_{l\ge 1}  \frac{A_{(l)}(t)} {l! } z^l ).$$
To complete the proof we observe that 
\begin{equation} \label{circeul} A_{(l)}(t) = t A_{l-1}(t).\end{equation}
Indeed, for $\sigma \in S_{(l)}$, write $\sigma$ in cycle notation $(x_1,x_2,\ldots ,x_l)$ with $x_l = l$.   Now let $v(\sigma)=x_1\ldots x_{l-1}$, a permutation in $S_{l-1}$ in one line notation.  The excedance set of $\sigma$ is the union of $\{x_{l-1}\}$ and  $\{x_i : i \mbox{ is an ascent of } v(\sigma) \}$. 
Since $v$ is a bijection from $S_{(l)}$ to $S_{l-1}$, equation (\ref{circeul}) holds.
\end{proof}

\begin{proof}[Proof of Theorem~\ref{psum}]
We have
\begin{eqnarray} \nonumber
\sum_{n \geq 1}\sum_{j=0}^{n-1}Q_{(n),j}t^j & = & \LL\left[\sum_{i \geq 1}\sum_{j=0}^{i-1}Q_{i,j}t^j\right] \\ 
\nonumber & = & \LL\left[\sum_{k \geq 1}\sum_{\nu:l(\nu)=k}z_\nu^{-1}A_k(t)\prod_{h=1}^{k}[\nu_h]_t\p_{\nu_h}\right] \\ 
\nonumber & = & \sum_{d \geq 1}\frac{\mu(d)}{d}\sum_{i \geq 1}\frac{(-1)^{i-1}}{i}\p_d^i\left[\sum_{k \geq 1}\sum_{\nu:l(\nu)=k}z_\nu^{-1}A_k(t)\prod_{h=1}^{k}[\nu_h]_t\p_{\nu_h}\right] \\ 
\nonumber & = & \sum_{d \geq 1}\frac{\mu(d)}{d}\sum_{i \geq 1}\frac{(-1)^{i-1}}{i}\left(\sum_{k \geq 1}\sum_{\nu:l(\nu)=k}z_\nu^{-1}A_k(t^d)\prod_{h=1}^{k}[\nu_h]_{t^d}\p_{d\nu_h}\right)^i \\ 
\label{pfconj} & = & \sum_{d \geq 1} \frac{\mu(d)}{d}\log\left(1+\sum_{k \geq 1}\sum_{\nu:l(\nu)=k}z_\nu^{-1}A_k(t^d)\prod_{h=1}^{k}[\nu_h]_{t^d}\p_{d\nu_h}\right),
\end{eqnarray}
the first equality following from  (\ref{ple2}), the second from (\ref{qnj}), the third from the definition of $\LL$ and the fourth from (\ref{pple}).

For any $k \in \pp$, let ${\mathcal M}_k$ be the set of all sequences $a=(a_1,a_2,\ldots)$ of nonnegative integers such that $\sum_{i \geq 1}a_i=k$.  Then
\begin{eqnarray}\nonumber
\sum_{\nu:l(\nu)=k}z_\nu^{-1}\prod_{i=1}^{k}[\nu_i]_{t^d}\p_{d\nu_i} & = & \sum_{\nu:l(\nu)=k}\frac{1}{\prod_{r \geq 1}m_r(\nu)!}\prod_{r \geq 1}\left(\frac{[r]_{t^d}\p_{dr}}{r}\right)^{m_r(\nu)}
\\ \nonumber
 & = & \frac{1}{k!}\sum_{a \in {\mathcal M}_k}{{k} \choose {a_1,a_2,\ldots}}\prod_{r \geq 1}\left(\frac{[r]_{t^d}\p_{dr}}{r}\right)^{a_r} 
\\ \label{mceq}
 & = & \frac{1}{k!}\left(\sum_{r \geq 1}\frac{[r]_{t^d}\p_{dr}}{r}\right)^k.
\end{eqnarray}

We see now that
\begin{eqnarray*}
\sum_{n \geq 1}\sum_{j=0}^{n-1}Q_{(n),j}&=& \sum_{d \geq 1}\frac{\mu(d)}{d}\log\left(1+\sum_{k \geq 1}\frac{A_k(t^d)}{k!}\left(\sum_{r \geq 1}\frac{[r]_{t^d}\p_{dr}}{r}\right)^k\right)\\ &=&
 \sum_{d \geq 1} \frac{\mu(d)}{d}\sum_{k \geq 1} \frac{t^dA_{k-1}(t^d)}{k!}\left(\sum_{r \geq 1}\frac{[r]_{t^d}\p_{dr}}{r}\right)^k \\ & = & \sum_{d \geq 1}\frac{\mu(d)}{d}\sum_{k \geq 1}\sum_{\nu:l(\nu)=k}z_\nu^{-1}t^dA_{k-1}(t^d)\prod_{i=1}^{k}[\nu_i]_{t^d}\p_{d\nu_i} \\ & = & \sum_{k \geq 1}\sum_{d \geq 1}\mu(d)d^{k-1}t^dA_{k-1}(t^d)\sum_{\nu:l(\nu)=k}z_{d\nu}^{-1}\prod_{i=1}^{k}[\nu_i]_{t^d}\p_{d\nu_i} \\ & = & \sum_{k \geq 1}\sum_{\nu:l(\nu)=k}z_\nu^{-1}\p_\nu\sum_{d|g(\nu)}\mu(d)d^{k-1}t^dA_{k-1}(t^d)\prod_{i=1}^{k}\left[\frac{\nu_i}{d}\right]_{t^d} \\ & = & \sum_{k \geq 1}\sum_{\nu:l(\nu)=k}z_\nu^{-1}\p_\nu G_\nu(t).
\end{eqnarray*}
Indeed, first equality is obtained by combining  (\ref{pfconj}) and (\ref{mceq}), the second  equality is obtained from (\ref{exfor}), the third follows from (\ref{mceq}), the fourth and fifth are obtained by straightforward manipulations, and the last follows from (\ref{resumeq}).
\end{proof}

\section{The proof of Theorem \ref{main1}} \label{eqth1sec}

\subsection{The expansion coefficients $\chi^{Q_{\lambda,j}}_{d^k}$} 
To compute the expansion coefficients $\chi^{Q_{\lambda,j}}_{d^k}$, we will need to obtain results like Theorem \ref{psum} with the partition $(n)$ replaced by an arbitrary partition $\lambda$, but in such results we will only need the coefficients of power sum symmetric functions of the form $\p_{(d,\ldots,d)}$.  
We begin with a definition generalizing that of $f(t)_d$.  For a power series $f(t)=\sum_ja_jt^j$ and positive integers $b,c$, let $f(t)_{b,c}$ be the power series obtained from $f$ by erasing all terms $a_it^i$ such that $\gcd(i,b) \neq c$, so
\[
f(t)_{b,c}:=\sum_{\gcd(i,b)=c}a_it^i.
\]
For example, if $f(t)=1+2t+3t^2+4t^3+5t^4$ then $f(t)_{6,2}=3t^2+5t^4$.  If $c|b$ then for any power series $g,h$, we have 
\begin{equation} \label{ghbc}
\left(g(t)h(t^b)\right)_{b,c}=g(t)_{b,c}h(t^b).
\end{equation}
We will use the following result.

\begin{lemma} \label{ftbclem}
Let $k,b,c \in \pp$ and assume that $c|b$.  Then
\begin{equation} \label{ftbc}
\left(tA_{k-1}(t)[b]_t^k\right)_{b,c}=c^{k-1}\left(t^cA_{k-1}(t^c)[b/c]_{t^c}^k\right)_{b,c}.
\end{equation}
\end{lemma}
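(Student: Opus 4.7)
The plan is to reduce both sides of (\ref{ftbc}) to the common form $(1-t^b)^k \cdot S(t)$ for an explicit series $S(t)$, by combining the identity $tA_{k-1}(t)/(1-t)^k = \sum_{j\geq 1} j^{k-1} t^j$ from (\ref{euleq}) with the factorization $[b]_t = (1-t^b)/(1-t)$. These immediately give
\[
tA_{k-1}(t) [b]_t^k = (1-t^b)^k \sum_{j \geq 1} j^{k-1} t^j,
\]
and the same manipulation with $t$ replaced by $t^c$ (and using $[b/c]_{t^c} = (1-t^b)/(1-t^c)$) yields $t^c A_{k-1}(t^c) [b/c]_{t^c}^k = (1-t^b)^k \sum_{j\geq 1} j^{k-1} t^{cj}$.

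Since $c \mid b$, the factor $(1-t^b)^k$ is a polynomial in $t^b$, so (\ref{ghbc}) lets me pass the filter $(\cdot)_{b,c}$ through it on both sides. The left side of (\ref{ftbc}) then becomes $(1-t^b)^k \sum_{\gcd(j,b)=c} j^{k-1} t^j$, and the right side becomes $c^{k-1} (1-t^b)^k \sum_{\gcd(cj,b)=c} j^{k-1} t^{cj}$. To match these I reindex the right-hand sum via $i = cj$ and use the elementary identity $\gcd(cj,b) = c\cdot\gcd(j,b/c)$, which converts the condition $\gcd(cj,b) = c$ into $\gcd(i,b) = c$ (with $c \mid i$ automatic). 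The prefactor $c^{k-1}$ is absorbed by $c^{k-1} j^{k-1} = i^{k-1}$, so the two sums agree term by term.

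The argument is entirely formal and I expect no real obstacle. The only hypothesis that needs checking is that $(1-t^b)^k$ lies in $\qq[t^b]$, which is immediate from $c \mid b$. I would present the proof as a single chain of three displayed equalities, making the reindexing $i = cj$ and the absorption of $c^{k-1}$ explicit so the final term-by-term match is visible.
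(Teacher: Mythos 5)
Your proposal is correct and follows essentially the same route as the paper's proof: both use (\ref{euleq}) to rewrite $tA_{k-1}(t)/(1-t)^k$ as $\sum_{j\ge 1}j^{k-1}t^j$, pass the filter through the factor $(1-t^b)^k$ via (\ref{ghbc}), and match the two filtered sums by the reindexing $i=cj$ with $c^{k-1}j^{k-1}=i^{k-1}$. The only difference is presentational — you meet in the middle rather than writing one chain from left to right.
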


\begin{proof}
We have
\begin{eqnarray*}
\left(tA_{k-1}(t)[b]_t^k\right)_{b,c} & = & \left(\frac{tA_{k-1}(t)}{(1-t)^k}(1-t^b)^k\right)_{b,c} \\ & = & \left(\frac{tA_{k-1}(t)}{(1-t)^k}\right)_{b,c}(1-t^b)^k \\ & = & (1-t^b)^k\sum_{j:\gcd(j,b)=c}j^{k-1}t^j \\ & = & (1-t^b)^k\sum_{i:\gcd(i,b/c)=1}(ic)^{k-1}t^{ic} \\ & = & c^{k-1}(1-t^b)^k\sum_{i:\gcd(i,b/c)=1}i^{k-1}t^{ic} \\ & = & c^{k-1}(1-t^b)^k\left(\frac{t^cA_{k-1}(t^c)}{(1-t^c)^k}\right)_{b,c} \\ & = & c^{k-1}\left(t^cA_{k-1}(t^c)\frac{(1-t^b)^k}{(1-t^c)^k}\right)_{b,c} \\ & = & c^{k-1}\left(t^cA_{k-1}(t^c)[b/c]_{t^c}^{k}\right)_{b,c}.
\end{eqnarray*}
Indeed, the second and seventh equalities follow from (\ref{ghbc}), the third and sixth follow from (\ref{euleq}), and the rest are straightforward.
\end{proof}

We begin our computation of $\chi^{Q_{\lambda,j}}_{d^k}$ by  considering first the case where all parts of $\lambda$ have the same size. 
 For $\lambda, \nu \in \pr(n)$ and $j \in \{0,1,\dots,n-1\}$,  set
 $$ \chi_\nu^{\lambda,j} := \chi^{Q_{\lambda,j}}_\nu$$ and
\begin{equation} \label{pinu}
\pi_\nu :={{n} \choose {\nu_1,\ldots,\nu_{l(\nu)}}}\frac{1}{\prod_{j=1}^{n}m_j(\nu)!}.
\end{equation}

\begin{thm} \label{mrkdth}
Let $n,r,m,d,k \in \pp$ with $n=rm=dk$.  Set
\[
\pr(m;d,r):=\{\mu=(\mu_1,\ldots,\mu_{l(\mu)}) \in \pr(m):\mu_i|d|r\mu_i \mbox{ for all } i \in [l(\mu)]\}.
\]
Then
\begin{equation} \label{mrdk}
\sum_{j=0}^{n-1}\chi_{d^k}^{r^m,j}t^j=\sum_{\mu \in \pr(m;d,r)}\pi_{\frac{r}{d}\mu}\prod_{i=1}^{l(\mu)}\left(tA_{\frac{r}{d}\mu_i-1}(t)[d]_t^{\frac{r}{d}\mu_i}\right)_{d,\mu_i}.
\end{equation}
\end{thm}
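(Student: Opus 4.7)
The plan is to specialize (\ref{ple}) to $\lambda = r^m$, giving
$$\sum_{j=0}^{n-1} Q_{r^m, j} t^j = \h_m[H], \qquad H := \sum_{l=0}^{r-1} Q_{(r), l} t^l,$$
and substitute the power-sum expansion of $H$ provided by Theorem \ref{psum}. Combining the Newton-style identity $\h_m = \sum_{\mu \in \pr(m)} z_\mu^{-1} \p_\mu$ with the multiplicativity of plethysm and rule (\ref{pple}) then yields
$$\h_m[H] = \sum_{\mu \in \pr(m)} z_\mu^{-1} \prod_{i=1}^{l(\mu)} \sum_{\nu^{(i)} \in \pr(r)} z_{\nu^{(i)}}^{-1} G_{\nu^{(i)}}(t^{\mu_i}) \p_{\mu_i \nu^{(i)}}.$$

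Since $\chi_{d^k}^{r^m,j}$ is $z_{d^k}$ times the coefficient of $t^j \p_{d^k}$ in this sum, the next step is to identify which tuples $(\mu, \nu^{(1)}, \ldots, \nu^{(l(\mu))})$ contribute. The equation $\prod_i \p_{\mu_i \nu^{(i)}} = \p_{d^k}$ forces every part $\mu_i \nu^{(i)}_s$ to equal $d$, so each $\nu^{(i)}$ must be the rectangular partition $(d/\mu_i)^{r\mu_i/d}$, and the integrality conditions $\mu_i \mid d$ and $d \mid r\mu_i$ are exactly those defining $\pr(m;d,r)$. Under this identification, $z_{\nu^{(i)}} = (d/\mu_i)^{r\mu_i/d}(r\mu_i/d)!$, and what remains is to transform each $G$-factor into the form appearing in (\ref{mrdk}) and to check that the overall scalar prefactor collapses to $\pi_{(r/d)\mu}$.

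For the transformation I would use the elementary identity $\gcd(\mu_i j, d) = \mu_i \gcd(j, d/\mu_i)$, valid since $\mu_i \mid d$, to show that substituting $t \mapsto t^{\mu_i}$ in the $(\cdot)_{d/\mu_i}$-filtration defining $G$ produces precisely the $(\cdot)_{d, \mu_i}$-filtered series in $t$. Applying Lemma \ref{ftbclem} with $b = d$, $c = \mu_i$, and exponent $r\mu_i/d$ in place of $k$ then yields
$$G_{(d/\mu_i)^{r\mu_i/d}}(t^{\mu_i}) = \mu_i^{1 - r\mu_i/d} \bigl(tA_{(r/d)\mu_i - 1}(t) [d]_t^{(r/d)\mu_i}\bigr)_{d, \mu_i}.$$
Substituting this back, the constant factor $z_{d^k}\, z_\mu^{-1} \prod_i z_{\nu^{(i)}}^{-1} \mu_i^{1 - r\mu_i/d}$ should telescope: since $\sum_i r\mu_i/d = k$, the powers of $d$ and $\mu_i$ cancel, and the identity $\prod_i \mu_i / z_\mu = 1/\prod_j m_j(\mu)!$ combined with $\prod_j m_j((r/d)\mu)! = \prod_j m_j(\mu)!$ produces the multinomial coefficient $\binom{k}{(r/d)\mu_1, \ldots, (r/d)\mu_{l(\mu)}}/\prod_j m_j((r/d)\mu)! = \pi_{(r/d)\mu}$, which is exactly what (\ref{mrdk}) requires.

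I expect the main technical obstacle to be the careful tracking of the filtration operations $(\cdot)_{d/\mu_i}$ and $(\cdot)_{d, \mu_i}$ under the substitution $t \mapsto t^{\mu_i}$, together with the multi-index bookkeeping among $z_{d^k}$, $z_\mu$, the $z_{\nu^{(i)}}$, and the correction factor $\mu_i^{1 - r\mu_i/d}$ coming from Lemma \ref{ftbclem}; the remaining manipulations are routine.
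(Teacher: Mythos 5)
Your proposal is correct and follows essentially the same route as the paper's proof: specializing (\ref{ple}) to $\lambda=r^m$, expanding $\h_m$ in power sums, applying Theorem \ref{psum} and (\ref{pple}), extracting the coefficient of $\p_{d^k}$ (which forces $\nu^{(i)}=(d/\mu_i)^{r\mu_i/d}$ and the conditions defining $\pr(m;d,r)$), converting the filtration via the $\gcd$ identity, and invoking Lemma \ref{ftbclem} together with the $z$-factor bookkeeping that yields $\pi_{\frac{r}{d}\mu}$. All the claimed identities, including the correction factor $\mu_i^{1-r\mu_i/d}$ and its cancellation, check out against the paper's equations (\ref{pdk}), (\ref{zpi}) and (\ref{gdmu}).
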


\begin{proof}
Note that (\ref{ple}) implies that
\begin{equation} \label{reple}
\sum_{j=0}^{n-1}Q_{r^m,j}t^j=\h_m\left[\sum_{j=0}^{r-1}Q_{(r),j}t^j\right].
\end{equation}
Now
\begin{eqnarray*}
\h_m\left[\sum_{j=0}^{r-1}Q_{(r),j}t^j\right] & = & \sum_{\mu \in \pr(m)}z_\mu^{-1}\p_\mu\left[\sum_{j=0}^{r-1}Q_{(r),j}t^j\right] \\ & = & \sum_{\mu \in \pr(m)}z_\mu^{-1}\prod_{i=1}^{l(\mu)}\p_{\mu_i}\left[\sum_{\nu \in \pr(r)}z_\nu^{-1}G_\nu(t)\p_\nu\right] \\ & = & \sum_{\mu \in \pr(m)}z_\mu^{-1}\prod_{i=1}^{l(\mu)}\sum_{\nu \in \pr(r)}z_\nu^{-1}G_\nu(t^{\mu_i})\p_{\mu_i\nu}.
\end{eqnarray*}
Indeed, the first equality follows from  the well known expansion (see any of \cite{Mac,Sag,Sta2}) of $\h_m$ in the power sum basis, the second from Theorem \ref{psum} and the third from (\ref{pple}).

On the other hand, it follows from the definition of $\chi^{\lambda,j}_\nu$ that
\[
\sum_{j=0}^{n-1}Q_{r^m,j}t^j=\sum_{\nu \in \pr(n)}z_\nu^{-1}\sum_{j \geq 0}\chi^{r^m,j}_\nu\p_\nu t^j.
\]
We see now equating the coefficients of $\p_{d^k}$ on both sides of (\ref{reple}) yields
\begin{equation} \label{pdk}
\sum_{j=0}^{n-1}\chi^{r^m,j}_{d^k}t^j=z_{d^k}\sum_{\mu \in \pr(m;d,r)}z_\mu^{-1}\prod_{i=1}^{l(\mu)}z^{-1}_{\left(\frac{d}{\mu_i}\right)^{r\mu_i/d}}G_{\left(\frac{d}{\mu_i}\right)^{r\mu_i/d}}(t^{\mu_i}).
\end{equation}

Now for all $\mu \in \pr(m;d,r)$, we have
\begin{eqnarray} \nonumber
z_{d^{k}}z_\mu^{-1}\prod_{i=1}^{l(\mu)}z^{-1}_{\left(\frac{d}{\mu_i}\right)^{r\mu_i/d}} &=& \frac{d^kk!}{\prod_{j=1}^mm_j(\mu)!\prod_{i=1}^{l(\mu)}\mu_i\left(\frac{r\mu_i}{d}\right)!\left(\frac{d}{\mu_i}\right)^{r\mu_i/d}} \\  \nonumber & &
\\ \nonumber &=&
 = \frac{k!\prod_{i=1}^{l(\mu)}\mu_i^{(r\mu_i/d)-1}}{\prod_{j=1}^{m}m_j(\mu)!\prod_{i=1}^{l(\mu)}\left(\frac{r\mu_i}{d}\right)!}
\\ \nonumber & &  \\
 \label{zpi}
&=& \pi_{\frac{r}{d}\mu}\prod_{i=1}^{l(\mu)}\mu_i^{(r\mu_i/d)-1},
\end{eqnarray}

and
\begin{eqnarray} \nonumber
G_{\left(\frac{d}{\mu_i}\right)^{r\mu_i/d}}(t^{\mu_i}) &=&\left(tA_{\left(r\mu_i/d\right)-1}(t)\left[d/\mu_i\right]_t^{r\mu_i/d}\right)_{d/\mu_i} |_{t=t^{\mu_i}}
\\
 \label{gdmu}
&=& \left(t^{\mu_i}A_{\left(r\mu_i/d\right)-1}(t^{\mu_i})\left[d/\mu_i\right]_{t^{\mu_i}}^{r\mu_i/d}\right)_{d,\mu_i}.
\end{eqnarray}
We now have
\begin{eqnarray*}
\sum_{j=0}^{n-1}\chi_{d^k}^{r^m,j}  t^j & = & \sum_{\mu \in \pr(m;d,r)}\pi_{\frac{r}{d}\mu}\prod_{i=1}^{l(\mu)}\mu_i^{(r\mu_i/d)-1}\left(t^{\mu_i}A_{\left(r\mu_i/d\right)-1}(t^{\mu_i})\left[d/\mu_i\right]_{t^{\mu_i}}^{r\mu_i/d}\right)_{d,\mu_i} \\ & = & \sum_{\mu \in \pr(m;d,r)}\pi_{\frac{r}{d}\mu}\prod_{i=1}^{l(\mu)}\left(tA_{\frac{r}{d}\mu_i-1}(t)[d]_t^{\frac{r}{d}\mu_i}\right)_{d,\mu_i},
\end{eqnarray*}
the first equality being obtained by substituting (\ref{zpi}) and (\ref{gdmu}) into (\ref{pdk}), and the second following from Lemma \ref{ftbclem}.
\end{proof}

We use Theorem \ref{mrkdth} to handle general $\lambda$.

\begin{thm} \label{chilamnu}
Say $\lambda \in \pr(n)$ and $n=kd$.
\begin{enumerate}
\item If there is some $r \in [n]$ such that $d$ does not divide $rm_r(\lambda)$ then 
$\chi^{\lambda,j}_{d^k}=0$ for all $0 \leq j \leq n-1$.
\item If $d$ divides $rm_r(\lambda)$ for all $r \in [n]$ then
\begin{equation*}
\sum_{j=0}^{n-1}\chi_{d^k}^{\lambda,j} t^j={{\frac{n}{d}} \choose {\frac{1m_1(\lambda)}{d},\frac{2m_2(\lambda)}{d},\ldots,\frac{nm_n(\lambda)}{d}}} \prod_{r=1}^{n}\sum_{j=0}^{rm_r(\lambda)-1}\chi_{d^{rm_r(\lambda)/d}}^{r^{m_r(\lambda)},j}\,\, t^j.
\end{equation*}
\end{enumerate} 
\end{thm}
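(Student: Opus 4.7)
The plan is to derive the statement from the plethystic identity (\ref{ple}) by reducing to the equal-parts case already handled in Theorem~\ref{mrkdth} and then extracting a single power-sum coefficient from a product.

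First, I would rewrite (\ref{ple}) as a product of generating functions of equal-parts shapes. Combining (\ref{ple}) with the identity (\ref{reple}), which asserts $\h_{m_r(\lambda)}\!\left[\sum_l Q_{(r),l}t^l\right]=\sum_j Q_{r^{m_r(\lambda)},j}t^j$, one obtains
\[
\sum_{j=0}^{n-1}Q_{\lambda,j}\,t^j\;=\;\prod_{r=1}^{n}\sum_{j=0}^{rm_r(\lambda)-1}Q_{r^{m_r(\lambda)},j}\,t^j,
\]
where factors with $m_r(\lambda)=0$ are understood to be $1$. Next, expand each factor in the $\{z_\nu^{-1}\p_\nu\}$-basis and extract the coefficient of $\p_{d^k}$ on both sides. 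Since $\p_{\nu_1}\cdots\p_{\nu_n}=\p_{\nu_1\cup\cdots\cup\nu_n}$ (where $\cup$ denotes concatenation of parts) and the $\{\p_\mu\}$ are linearly independent, the only tuples $(\nu_1,\ldots,\nu_n)$ with $\nu_r\in\pr(rm_r(\lambda))$ that contribute to $\p_{d^k}$ are those with $\nu_r=d^{k_r}$ for some $k_r\ge 0$. This forces $dk_r=rm_r(\lambda)$, hence $k_r=rm_r(\lambda)/d$, which is possible precisely when $d\mid rm_r(\lambda)$ for every $r$. Part (1) of the theorem is then immediate.

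Finally, assuming the divisibility condition holds, I would multiply by $z_{d^k}$ to pass from the coefficient of $\p_{d^k}$ to that of $z_{d^k}^{-1}\p_{d^k}$, which is $\sum_j\chi^{\lambda,j}_{d^k}t^j$. Using $z_{d^{k_r}}=d^{k_r}k_r!$ together with $\sum_r k_r=k=n/d$, the resulting scalar factor is
\[
\frac{z_{d^k}}{\prod_r z_{d^{k_r}}}\;=\;\frac{d^k k!}{\prod_r d^{k_r}k_r!}\;=\;\binom{n/d}{\tfrac{1m_1(\lambda)}{d},\tfrac{2m_2(\lambda)}{d},\ldots,\tfrac{nm_n(\lambda)}{d}},
\]
and combining this with $\prod_r\sum_j\chi^{r^{m_r(\lambda)},j}_{d^{k_r}}t^j$ yields the claimed formula in part (2). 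The only subtlety is the bookkeeping that determines which power-sum partitions arise in each factor; this is a routine consequence of the linear independence of $\{\p_\mu\}$, and no further symmetric-function identity is needed beyond (\ref{ple}) and Theorem~\ref{mrkdth}.
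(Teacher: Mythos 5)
Your proposal is correct and follows essentially the same route as the paper: factor the generating function via (\ref{ple}) into equal-parts pieces, expand in the power sum basis, note that only the tuple with each $\nu_r=d^{rm_r(\lambda)/d}$ can contribute to $\p_{d^k}$ (giving part (1)), and compute the normalizing factor $z_{d^k}/\prod_r z_{d^{rm_r(\lambda)/d}}$ as the stated multinomial coefficient. The bookkeeping step you flag as the only subtlety is handled identically in the paper.
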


\begin{proof}
It follows directly from (\ref{ple}) that
\begin{equation} \label{larmr}
\sum_{j=0}^{n-1}Q_{\lambda,j}t^j=\prod_{r=1}^{n}\sum_{j=0}^{rm_r(\lambda)-1}Q_{r^{m_r(\lambda)},j}\,\,t^j.
\end{equation}
Expressing both sides of  (\ref{larmr}) in terms of the power sum basis, we get
\begin{equation} \label{frlarmr}
\sum_{\mu \in \pr(n)}z_\mu^{-1}\sum_{j=0}^{n-1}\chi_\mu^{\lambda,j} t^j\p_\mu=\prod_{r=1}^{n}\sum_{\nu \in \pr(rm_r(\lambda))}z_\nu^{-1}\sum_{j=0}^{rm_r(\lambda)-1}\chi_\nu^{r^{m_r(\lambda)},j}\,\,t^j\p_\nu.
\end{equation}
Equating coefficients of $\p_{d^k}$ in (\ref{frlarmr}) we see that if $d$ does not divide every $rm_r(\lambda)$ then $\chi_{d^k}^{\lambda,j}=0$ for all $j$, while if $d$ divides every $rm_r(\lambda)$ then
\begin{eqnarray*}
\sum_{j=0}^{n-1}\chi_{d^k}^{\lambda,j} t^j & = & z_{d^k}\prod_{r=1}^{n}z_{d^{rm_r(\lambda)/d}}^{-1}\sum_{j=0}^{rm_r(\lambda)-1}\chi_{d^{rm_r(\lambda)/d}}^{r^{m_r(\lambda)},j}\,\,t^j 
\\ & = & \frac{d^{n/d}(n/d)!}{\prod_{r=1}^{n}d^{rm_r(\lambda)/d}(rm_r(\lambda)/d)!}\prod_{r=1}^{n}\sum_{j=0}^{rm_r(\lambda)-1}\chi_{d^{rm_r(\lambda)/d}}^{r^{m_r(\lambda)},j}\,\,t^j \\ & = & {{\frac{n}{d}} \choose {\frac{1m_1(\lambda)}{d},\frac{2m_2(\lambda)}{d},\ldots,\frac{nm_n(\lambda)}{d}}} \prod_{r=1}^{n}\sum_{j=0}^{rm_r(\lambda)-1}\chi_{d^{rm_r(\lambda)/d}}^{r^{m_r(\lambda)},j}\,\, t^j. 
\end{eqnarray*}
\end{proof}

\subsection{The permutation character $\theta^{\lambda,j}$ of $G_n$}

Note that, upon considering cycle notation for elements of $S_n$, it is straightforward to show that if $\sigma \in \cjl$ then $\gamma_n^{-1}\sigma\gamma_n \in \cjl$.  Thus the claim in Theorem \ref{main1} that $G_n$ acts on $\cjl$ is correct.   Let $\theta^{\lambda,j}$ denote     the permutation character of the action of $G_n$ on   $\cjl$.  Hence,  $\theta^{\lambda,j}(\tau) $ is the number of elements of $\cjl$ centralized by $\tau \in G_n$.    For $\nu \in \pr(n)$, let  $\theta^{\lambda,j}_\nu= \theta^{\lambda,j}(\tau)$, 
where $\tau$ is any permutation of cycle type $\nu$.    Since all $\tau \in G_n$ have cycle type  of the form $d^k$,  where $dk=n$,  we need only concern ourselves with $\nu=d^k$.

With Theorems \ref{mrkdth} and \ref{chilamnu} in hand, we now produce matching results for the permutation characters $\theta^{\lambda,j}$ of $G_n$.  Again we begin with the case where $\lambda=r^m$ for some divisor $r$ of $n$.  Before doing so, we derive, in the form most useful for our arguments, some known facts about centralizers in $S_n$ of elements of $G_n$, along with straightforward consequences of these facts.

Fix positive integers $n,k,d$ with $n=kd$.  Set $\tau=\gamma_n^{-k} \in G_n$.  Note that $C_{S_n}(\tau)=C_{S_n}(\gamma_n^k)$.  For $i \in [n]$, we have
\[
i\tau=\left\{ \begin{array}{ll} i-k & i>k, \\ i-k+n & i \leq k. \end{array} \right.
\]
Now $\tau$ has cycle type $d^k$, and we can write $\tau$ as the product of $k$ $d$-cycles, $\tau=\tau_1\ldots\tau_k$, where $\tau_i$ has support
\[ 
X_i:=\{j \in [n]:j \equiv i \bmod k\}.\]

It follows that if $\sigma \in C_{S_n}(\tau)$ then for each $i \in [k]$ there is some $j \in [k]$ such that $X_i\sigma=X_j$.  Thus we have an action of $C_{S_n}(\tau)$ on $\{X_1,\ldots,X_k\}$, which gives rise to a homomorphism $$\Phi:C_{S_n}(\tau) \rightarrow S_k.$$  Given $\rho \in S_k$, define $\hat{\rho} \in S_n$ to be the element that, for $r \in [k]$ and $q \in \{0,\ldots,d-1\}$, maps $r+qk$ to $r\rho+qk$.  It is straightforward to check that $\hat{\rho} \in C_{S_n}(\tau)$ and $\Phi(\hat{\rho})=\rho$.  Moreover, if we set
\[
R:=\{\hat{\rho}:\rho \in S_k\},
\]
then $R \leq C_{S_n}(\tau)$ and the restriction of $\Phi$ to $R$ is an isomorphism.  It follows that if we set $K={\rm kernel}(\Phi)$ then $C_{S_n}(\tau)$ is the semidirect product of $K$ and $R$.  Now
\[
K=\{\sigma \in C_{S_n}(\tau):X_i\sigma=X_i \mbox{ for all } i \in [k]\}=\prod_{i=1}^{k}C_{S_{X_i}}(\tau_i).
\]
Since every $d$-cycle in $S_d$ generates its own centralizer in $S_d$, we have
\[
K=\prod_{i=1}^{k}\langle \tau_i \rangle=\left\{\prod_{i=1}^{k}\tau_i^{e_i}:e_1,\ldots,e_k \in \{0,\ldots,d-1\}\right\}.
\]
Now, given $\rho \in S_k$ and $e_1,\ldots,e_k \in \{0,\ldots,d-1\}$, set
\[
\sigma:=\tau_1^{e_1}\ldots\tau_k^{e_k}\hat{\rho} \in C_{S_n}(\tau).
\]
For $r \in [k]$ and $q \in \{0,\ldots,d-1\}$, we have (with $\sigma$ acting on the right)
\begin{equation} \label{centact}
(r+qk)\sigma=\left\{ \begin{array}{ll} r\rho+(q-e_r)k & q \geq e_r, \\ r\rho+(q-e_r)k+n & q<e_r. \end{array} \right.
\end{equation}
It follows that $r+qk \in \Exc(\sigma)$ if and only if either $q < e_r$ or $e_r=0$ and $r<r\rho$.  We collect in the next lemma some useful consequences of what we have just seen.

\begin{lemma} \label{centlem}
Let $n=dk$ and let $\tau=\gamma_n^{-k}$.  Let $\sigma \in C_{S_n}(\tau)$.  Then there exist unique $\rho \in S_k$ and $e_1,\ldots,e_k \in \{0,\ldots,d-1\}$ such that
\[
\sigma=\tau_1^{e_1}\ldots\tau_k^{e_k}\hat{\rho},
\]
and if we define $E_0$ to be the number of $r \in [k]$ such that $e_r=0$ and $r \in \Exc(\rho)$, then
\begin{equation} \label{centeq}
\exc(\sigma)=dE_0+\sum_{i=1}^{k}e_i.
\end{equation} 
\end{lemma}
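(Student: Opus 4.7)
The plan is to read off both assertions directly from the semidirect product decomposition $C_{S_n}(\tau) = K \rtimes R$ that was just established, together with the explicit formula (\ref{centact}) for the action of $\sigma$ on $[n]$.

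For the existence and uniqueness of the representation $\sigma = \tau_1^{e_1}\cdots\tau_k^{e_k}\hat\rho$, I would simply invoke that decomposition: $\sigma$ factors uniquely as $\kappa \cdot \hat\rho$ with $\kappa \in K$ and $\hat\rho \in R$ (via $\rho = \Phi(\sigma)$ and $\kappa = \sigma\hat\rho^{-1}$), and then $\kappa$ factors uniquely as $\prod_i \tau_i^{e_i}$ with $e_i \in \{0,\ldots,d-1\}$ since $K = \prod_i \langle \tau_i\rangle$ and each $\tau_i$ has order $d$.

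For the excedance identity, I would fix $r \in [k]$ and $q \in \{0,\ldots,d-1\}$ and use (\ref{centact}) to decide when the index $i = r + qk$ lies in $\Exc(\sigma)$. If $q < e_r$, then $(r+qk)\sigma - (r+qk) = (r\rho - r) + (d - e_r)k \geq (1-k) + k = 1 > 0$, using $r\rho - r \geq 1-k$ and $e_r \leq d-1$, so every such $i$ is an excedance. If $q \geq e_r$, then $(r+qk)\sigma - (r+qk) = (r\rho - r) - e_r k$; since $|r\rho - r| \leq k-1$, this is positive precisely when $e_r = 0$ and $r\rho > r$, i.e. when $e_r = 0$ and $r \in \Exc(\rho)$.

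Summing: the first case contributes exactly $e_r$ excedances for each $r$, while the second case contributes $d$ excedances for each $r$ with $e_r = 0$ and $r \in \Exc(\rho)$, and $0$ otherwise. (Here one should check that $i = n$, which lies outside $[n-1]$, is never miscounted: $r \in \Exc(\rho)$ forces $r \leq k-1$, so $i = r+qk \leq n-1$ in the second case; and in the first case $q < e_r \leq d-1$ forces $i \leq k + (d-2)k < n$.) Adding these contributions yields $\exc(\sigma) = \sum_{r=1}^k e_r + dE_0$, as desired. The only slightly delicate point is the inequality analysis that pins down the sign of $(r\rho - r) - e_r k$ using the bound $|r\rho - r| \leq k-1$; everything else is straightforward bookkeeping.
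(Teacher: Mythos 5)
Your proof is correct and takes essentially the same approach as the paper: the paper presents this lemma as a collection of consequences of the semidirect product decomposition $C_{S_n}(\tau)=K\rtimes R$ with $K=\prod_i\langle\tau_i\rangle$ and of the displayed action formula, merely asserting the excedance criterion ($r+qk\in\Exc(\sigma)$ iff $q<e_r$, or $e_r=0$ and $r<r\rho$) that you verify explicitly via the bound $|r\rho-r|\le k-1$. Your additional checks (the sign analysis and the exclusion of the index $i=n$) simply fill in details the paper leaves implicit.
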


Note that  the unique $\rho \in S_k$ of Lemma~\ref{centlem} is  equal to $\Phi(\sigma)$ defined above.
For $\mu \in \pr(k)$ and any divisor $r$ of $n$, set
\[
\rc_\mu:=\{\sigma \in C_{S_n}(\tau):\Phi(\sigma) \in \cm\}
\]
and
\[
\rc_{\mu,r}:=\rc_\mu \cap S_{r^{n/r}},
\]
so $\rc_{\mu,r}$ consists of those $\sigma \in C_{S_n}(\tau)$ such that $\sigma$ has cycle type $r^{n/r}$ and $\Phi(\sigma)$ has cycle type $\mu$.  

\begin{lemma} \label{ckr}
For any divisor $r$ of $n$, we have
\[
\sum_{\sigma \in \rc_{(k),r}}t^{\exc(\sigma)}=\left\{ \begin{array}{ll} \left(tA_{k-1}(t)[d]_t^k\right)_{d,\frac{n}{r}} & \mbox{if } k|r, \\ 0 & \mbox{otherwise} \end{array} \right.
\]
\end{lemma}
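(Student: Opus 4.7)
The plan is to parametrize $\rc_{(k),r}$ using the semidirect product decomposition of $C_{S_n}(\tau)$ established before the lemma: each $\sigma \in C_{S_n}(\tau)$ is uniquely written as $\sigma = \tau_1^{e_1}\cdots\tau_k^{e_k}\hat{\rho}$ with $\rho = \Phi(\sigma) \in S_k$ and $e_r \in \{0,\dots,d-1\}$. The condition $\sigma \in \rc_{(k),r}$ then requires $\rho$ to be a $k$-cycle in $S_k$ \emph{and} $\sigma$ to have cycle type $r^{n/r}$.

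The first step is to analyze the cycle structure of $\sigma$ when $\rho$ is a $k$-cycle. Using (\ref{centact}), after $k$ applications of $\sigma$ the first coordinate returns to itself (since $\rho$ has order $k$) and the $q$-coordinate is shifted by $-S \bmod d$, where $S := \sum_{r=1}^k e_r$. This sum is independent of both the starting point and the particular $k$-cycle $\rho$ chosen, since any $k$-cycle traverses all of $[k]$ in one full iteration. The shift by $-S$ has order $d/\gcd(d,S)$ in $\zz/d\zz$, so every $\sigma$-orbit has length $kd/\gcd(d,S)$, yielding $\gcd(d,S)$ orbits of this common length. Thus $\sigma$ has cycle type $r^{n/r}$ exactly when $k \mid r$ and $\gcd(d,S) = n/r$. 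In particular, $\rc_{(k),r} = \emptyset$ when $k \nmid r$, matching the second case of the lemma.

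Now assume $k \mid r$. By Lemma~\ref{centlem}, $\exc(\sigma) = S + dE_0$, so $\gcd(\exc(\sigma),d) = \gcd(S,d)$, and the cycle-type condition becomes $\gcd(\exc(\sigma),d) = n/r$. For a fixed $k$-cycle $\rho$, I factor $\sum_{(e_1,\dots,e_k)} t^{\exc(\sigma)}$ over the independent choices of the $e_r$. When $r \in \Exc(\rho)$, the contribution from position $r$ is $t^d + t + t^2 + \cdots + t^{d-1} = t[d]_t$ (with $t^d$ arising from $e_r = 0$ via the $E_0$ term); when $r \notin \Exc(\rho)$, it is $1 + t + \cdots + t^{d-1} = [d]_t$. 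The product over $r \in [k]$ equals $t^{\exc(\rho)}[d]_t^k$. Summing over all $k$-cycles $\rho \in S_k$ and invoking (\ref{circeul}) yields
\[
\sum_{\sigma \in C_{S_n}(\tau):\, \Phi(\sigma) \text{ a } k\text{-cycle}} t^{\exc(\sigma)} = tA_{k-1}(t)\,[d]_t^k.
\]
Extracting the terms $t^j$ with $\gcd(j,d) = n/r$ is exactly the operation $(\cdot)_{d,\,n/r}$, producing the claimed formula.

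The main obstacle is the cycle-length computation for $\sigma$ when $\Phi(\sigma)$ is a $k$-cycle, and in particular identifying $\gcd(d,S)$ with the number of $\sigma$-orbits; once this is established, the rest is a routine generating-function manipulation based on Lemma~\ref{centlem} and (\ref{circeul}).
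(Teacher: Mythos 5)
Your proposal is correct and follows essentially the same route as the paper: the same semidirect-product parametrization via Lemma~\ref{centlem}, the same identification of cycle type $r^{n/r}$ with the condition $\gcd(d,\sum_i e_i)=n/r$ (hence $\gcd(\exc(\sigma),d)=n/r$), the same position-by-position factorization giving $t^{\exc(\rho)}[d]_t^k$, and the same appeal to (\ref{circeul}) followed by the $(\cdot)_{d,n/r}$ extraction. The only cosmetic difference is that you compute the common orbit length directly as $kd/\gcd(d,S)$ from the shift action of $\sigma^k$, whereas the paper reaches the same conclusion by a two-step divisibility argument on cycle lengths.
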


\begin{proof}
We begin by showing that
\begin{equation} \label{disu}
\rc_{(k)}=\biguplus_{k|r|n}\rc_{(k),r}.
\end{equation}

Certainly the union on the right side of (\ref{disu}) is contained in $\rc_{(k)}$, so we prove that this union contains $\rc_{(k)}$.  Let $\sigma \in \rc_{(k)}$.  By Lemma \ref{centlem}, we have
\[
\sigma=\tau_1^{e_1}\ldots\tau_k^{e_k}\hat{\rho}
\]
for unique $\rho \in S_{(k)}$ and $e_1,\ldots,e_k \in \{0,\ldots,d-1\}$.   It follows from (\ref{centact}) that for each $j \in [n]$ we have
\begin{equation} \label{sk}
j\sigma^k \equiv j-k\sum_{i=1}^{k}e_i \bmod n.
\end{equation}
Moreover, if $j\sigma^l \equiv j \bmod k$ then $k|l$.   Hence each cycle length in the cycle decomposition of $\sigma$ is a multiple of $k$.

We claim that all cycles in the cycle decomposition of $\sigma$ have length $sk$, where $s$ is the order of  $k\sum_{i=1}^{k}e_i$ in $\zz_n$.    Indeed, it follows from (\ref{sk}) that  for all $j \in [n]$,
$$ j \sigma^{sk} \equiv j -sk\sum_{i=1}^k e_i \equiv j \bmod n ,$$  which implies that $j\sigma^{sk} = j$.  Hence the order of $\sigma$ in $S_n$ divides $sk$. It follows that every cycle length in the cycle decomposition of $\sigma$ divides $sk$.   Now we need only show that $sk$ divides the length of each cycle. Suppose $\alpha$ is a cycle of  length $r$ and $j$ is an element in the support of $\alpha$.  We have $k|r$ since $k$ divides the length of every cycle.
Again using 
(\ref{sk}) we have, 
$$j = j\sigma^r = j ( \sigma^k)^{r/k}  \equiv j - \frac r k k \sum_{i=1}^k e_i \mod n ,$$ which implies that $( r/ k) k \sum_{i=1}^k e_i \equiv 0 \bmod n$.  Thus $s$, the order of $k \sum_{i=1}^k e_i $, divides $r/k$, which implies that $sk$ divides $r$.  We have therefore shown that $sk$ divides the length of every cycle, and since we have already shown that every cycle length divides $sk$, we conclude that all cycles in the cycle decomposition of $\sigma$ have  the same length $sk$, that is, $\sigma \in \rc_{(k),r}$ for some $r$ satisfying $k|r|n$, as claimed in (\ref{disu}).  

We have also shown  that  $\rc_{(k), r} = \emptyset$ if $k $ does not divide $r$.  Thus the claim of the lemma holds when $k$ does not divide $r$.

Next we show that if $\sigma \in \rc_{(k),r}$ then
\begin{equation} \label{gcdckr}
\gcd(\exc(\sigma),d)=\frac{n}{r}.
\end{equation}
As above, write $\sigma=\tau_1^{e_1}\ldots\tau_k^{e_k}\hat{\rho}$.  Since $d|n$, it follows from (\ref{centeq}) that
\begin{equation} \label{gcdgcd}
\gcd(\exc(\sigma),d)=\gcd\left(\sum_{i=1}^{k}e_i,d\right).
\end{equation}
Since $k\sum_{i=1}^{k}e_i$ has order $r/k$ in $(\zz_n,+)$, we have that
\[
\frac{r}{k}=\frac{n}{\gcd\left(n,k\sum_{i=1}^{k}e_i\right)}=\frac{d}{\gcd\left(d,\sum_{i=1}^{k}e_i\right)},
\]
the first equality following from simple facts about modular arithmetic and the second from the fact that $n=dk$.  Now we have
\begin{equation} \label{gcdd}
\gcd\left(d,\sum_{i=1}^{k}e_i\right)=\frac{kd}{r},
\end{equation}
and combining (\ref{gcdd}) with (\ref{gcdgcd}) gives (\ref{gcdckr}).

Combining (\ref{disu}) and (\ref{gcdckr}), we get
\begin{equation} \label{dnr}
\sum_{\sigma \in \rc_{(k),r}}t^{\exc(\sigma)}=\left(\sum_{\sigma \in \rc_{(k)}}t^{\exc(\sigma)}\right)_{d,\frac{n}{r}}
\end{equation}
for each divisor $r$ of $n$.

For $\rho \in S_k$ and $i \in [k]$, set
\[
f_{\rho,i}(t):=\left\{ \begin{array}{ll} t[d]_t & \mbox{if } i \in \Exc(\rho), \\ {[d]_t} & \mbox{otherwise} \end{array} \right.
\]
Then
\[
\sum_{\sigma \in \Phi^{-1}(\rho)}t^{\exc(\sigma)}=\prod_{i=1}^{k}f_{\rho,i}(t)=t^{\exc(\rho)}[d]_t^k,
\]
the first equality following from Lemma \ref{centlem}.  It follows now from (\ref{circeul}) that
\begin{equation} \label{akexc}
\sum_{\sigma \in \rc_{(k)}}t^{\exc(\sigma)}=tA_{k-1}(t)[d]_t^k,
\end{equation}
and combining (\ref{dnr}) and (\ref{akexc}) yields the lemma.
\end{proof}

\begin{lemma} \label{excprod}
For divisors $k,r$ of $n$ and $\mu \in \pr(k)$, we have
\[
\sum_{\sigma \in \rc_{\mu,r}}t^{\exc(\sigma)}=\pi_\mu\prod_{i=1}^{l(\mu)}\sum_{\sigma \in \rc_{(\mu_i),r}}t^{\exc(\sigma)},
\]
where   $\pi_\mu$ is  defined as in (\ref{pinu}).
\end{lemma}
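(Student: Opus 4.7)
The plan is to partition $\rc_{\mu,r}$ according to the cycle-support partition of $\rho = \Phi(\sigma)$, and then show that both the excedance statistic and the cycle-type constraint factor cleanly across blocks, reducing each factor to a single-cycle computation that matches $\rc_{(\mu_i),r}$.

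More precisely, for $\sigma = \tau_1^{e_1}\cdots\tau_k^{e_k}\hat{\rho} \in \rc_\mu$, the cycles of $\rho = \Phi(\sigma) \in \cm$ have supports $B_1,\ldots,B_{l(\mu)} \subseteq [k]$ with $|B_i|=\mu_i$, giving a set partition of $[k]$; conversely any such set partition, together with a choice of a $\mu_i$-cycle on each $B_i$ and arbitrary exponents $e_r \in \{0,\ldots,d-1\}$, determines a unique $\sigma \in \rc_\mu$. The number of set partitions of $[k]$ with block sizes $\mu$ is exactly $\pi_\mu$ (as defined in (\ref{pinu}) with $k$ in place of $n$). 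Setting $Y_i := \bigsqcup_{r \in B_i} X_r$, one checks that $\sigma$ stabilizes each $Y_i$, so we may define $\sigma_i := \sigma|_{Y_i}$.

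The first key observation, immediate from (\ref{centact}) applied block by block (or equivalently by grouping the sum in (\ref{centeq}) according to the $B_i$), is that excedances split:
\[
\exc(\sigma) = \sum_{i=1}^{l(\mu)} \exc(\sigma_i),
\]
where $\exc(\sigma_i)$ is computed with respect to the order on $Y_i$ inherited from $[n]$. The second key observation is that the cycles of $\sigma$ are exactly the (disjoint) union of the cycles of the $\sigma_i$, so $\sigma$ has cycle type $r^{n/r}$ if and only if each $\sigma_i$ has cycle type $r^{d\mu_i/r}$. Thus the condition $\sigma \in \rc_{\mu,r}$ factors as a conjunction of conditions on the individual $\sigma_i$.

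To finish, for each $i$ I would use the unique order-preserving bijection $Y_i \to [d\mu_i]$. Writing $B_i = \{b_1 < \cdots < b_{\mu_i}\}$, this bijection sends $b_j + qk$ to $j + q\mu_i$; it is compatible with the block decompositions, carries $\tau|_{Y_i}$ to the analogous $\tau'$ on $[d\mu_i]$, and so conjugates $\sigma_i$ into an element of the centralizer $C_{S_{d\mu_i}}(\tau')$ whose image in $S_{\mu_i}$ is a $\mu_i$-cycle. Hence this bijection restricts to an excedance- and cycle-type-preserving bijection from the set of possible $\sigma_i$ onto $\rc_{(\mu_i),r}$ in the $d\mu_i$-setting. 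Multiplying out, summing over the $\pi_\mu$ choices of set partition and over the independent contributions from each block yields
\[
\sum_{\sigma \in \rc_{\mu,r}} t^{\exc(\sigma)} = \pi_\mu \prod_{i=1}^{l(\mu)} \sum_{\sigma' \in \rc_{(\mu_i),r}} t^{\exc(\sigma')},
\]
as desired. The only real subtlety — and the main thing to verify carefully — is that the excedance count is genuinely local to each $Y_i$, i.e.\ no cross-block comparisons contribute; this falls out cleanly from the explicit formula (\ref{centact}), since $(r+qk)\sigma$ always lies in $Y_i$ whenever $r \in B_i$, so comparing $(r+qk)\sigma$ to $r+qk$ is an intra-block comparison.
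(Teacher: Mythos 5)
Your proof is correct and follows essentially the same route as the paper's: decompose $\rc_{\mu,r}$ by the block partition $\{B_1,\dots,B_{l(\mu)}\}$ of $[k]$ induced by $\Phi(\sigma)$, observe that $\sigma$ restricts to each $Y_i=\biguplus_{r\in B_i}X_r$ so that excedances and cycle type both localize, and transport each restriction to $\rc_{(\mu_i),r}$ via the order-preserving bijection $Y_i\to[d\mu_i]$. Your explicit description of that bijection ($b_j+qk\mapsto j+q\mu_i$) and your remark that no cross-block comparisons occur make precise a point the paper leaves implicit; otherwise the arguments coincide.
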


\begin{proof}
Given $\sigma \in \rc_{\mu,r}$, we write as usual $\sigma=\tau_1^{e_1}\ldots\tau_k^{e_k}\hat{\rho}$.  Now $\rho \in S_k$ has cycle type $\mu$, so we can write $\rho=\rho_1\ldots\rho_{l(\mu)}$ as a product of disjoint cycles whose lengths form the partition $\mu$.  For $i \in [l(\mu)]$, let $B_i$ be the support of $\rho_i$.  We may assume that $|B_i|=\mu_i$ for all $i$.  Set
\[
\beta(\sigma):=\{B_1,\ldots,B_{l(\mu)}\},
\]
so $\beta(\sigma)$ is a partition of $[k]$.  For $i \in l(\mu)$, set
\[
\sigma_i:=\left(\prod_{j \in B_i}\tau_j^{e_j}\right)\widehat{\rho_i} \in S_n.
\]
The supports of both $\widehat{\rho_i}$ and $\prod_{j \in B_i}\tau_j^{e_j}$ are contained in
\[
\{j+qk:j \in B_i,0 \leq q \leq \frac{n}{k}-1\}.
\]
It follows that $\widehat{\rho_i}$ and $\Pi_{j \in B_h} \tau_j^{e_j}$ commute for all $i \ne h$,
so
\[
\sigma=\prod_{i=1}^{l(\mu)}\sigma_i.
\]
Moreover,
\begin{equation} \label{ssi}
\exc(\sigma)=\sum_{i=1}^{l(\mu)}\exc(\sigma_i).
\end{equation}
For $i \in [l(\mu)]$, define $f_i$ to be the unique order preserving bijection from $B_i$ to $[\mu_i]$, and set
\[
\overline{\sigma}_i:=f_i^{-1}\sigma_if_i.
\]
Then, for each $i \in [l(\mu)]$, we have $\overline{\sigma}_i \in \rc_{(\mu_i),r}$ and 
\begin{equation} \label{oss}
\exc(\overline{\sigma}_i)=\exc(\sigma_i).  
\end{equation}
Let $\Pi_\mu$ be the set of partitions of $[k]$ that have $m_j(\mu)$ blocks of size $j$ for each $j$. 
For each partition $X\in \Pi_\mu$, set
\[
\rc_X:=\{\sigma \in \rc_{\mu,r}:\beta(\sigma)=X\}.
\]
The map from $\rc_X$ to $\prod_{i=1}^{l(\mu)}\rc_{(\mu_i),r}$ sending $\sigma$ to $(\overline{\sigma}_1,\ldots,\overline{\sigma}_{l(\mu)})$ is a bijection.  Given (\ref{ssi}) and (\ref{oss}), we see that
\begin{eqnarray*}
\sum_{\sigma \in \rc_{\mu,r} } t^{\exc(\sigma} &=& \sum_{X \in \Pi_\mu} \sum_{\sigma \in \rc_X}t^{\exc(\sigma)}\\ &=& |\Pi_\mu| \prod_{i=1}^{l(\mu)}\sum_{\rho \in \rc_{(\mu_i),r}}t^{\exc(\rho)}.  
\end{eqnarray*}
It is straightforward to see that $|\Pi_\mu| = \pi_\mu$,
so the lemma follows.
\end{proof}

\begin{thm} \label{mrkdth2}
Let $n,r,m,d,k \in \pp$ with $n=rm=dk$.  As in Theorem \ref{mrkdth}, let
\[
\pr(m;d,r)=\{\mu=(\mu_1,\ldots,\mu_{l(\mu)}) \in \pr(m):\mu_i|d|r\mu_i \mbox{ for all } i \in [l(\mu)]\}.
\]
Then
\begin{equation} \label{mrdk2}
\sum_{j=0}^{n-1}\theta_{d^k}^{r^m,j}t^j=\sum_{\mu \in \pr(m;d,r)}\pi_{\frac{r}{d}\mu}\prod_{i=1}^{l(\mu)}\left(tA_{\frac{r}{d}\mu_i-1}(t)[d]_t^{\frac{r}{d}\mu_i}\right)_{d,\mu_i}.
\end{equation}
\end{thm}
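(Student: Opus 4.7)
\medskip
\noindent\textbf{Proof Plan.} The plan is to compute $\sum_{j=0}^{n-1}\theta_{d^k}^{r^m,j}t^j$ directly from the definition of $\theta^{r^m,j}$ as a permutation character, using the structural analysis of $C_{S_n}(\tau)$ already set up in Lemmas~\ref{centlem}, \ref{ckr}, and \ref{excprod}. Since every element of $G_n$ of order $d$ is conjugate in $S_n$ to $\tau := \gamma_n^{-k}$ (both have cycle type $d^k$), the value $\theta_{d^k}^{r^m,j}$ is simply $|C_{S_n}(\tau) \cap C_{r^m,j}|$. Therefore
\[
\sum_{j=0}^{n-1}\theta_{d^k}^{r^m,j}t^j = \sum_{\sigma \in C_{S_n}(\tau) \cap S_{r^m}} t^{\exc(\sigma)}.
\]

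Next, I will partition the set on the right according to the cycle type $\nu \in \pr(k)$ of $\Phi(\sigma) \in S_k$, giving $C_{S_n}(\tau) \cap S_{r^m} = \bigsqcup_{\nu \in \pr(k)} \rc_{\nu,r}$. Applying Lemma~\ref{excprod} to each piece, and then applying Lemma~\ref{ckr} to each factor (used in the ambient group $S_{\nu_i d}$, with $\gamma_{\nu_i d}^{-\nu_i}$ playing the role of $\tau$ and $\nu_i$ playing the role of $k$), I get
\[
\sum_{\sigma \in \rc_{\nu,r}} t^{\exc(\sigma)} = \pi_\nu \prod_{i=1}^{l(\nu)} \bigl(t A_{\nu_i-1}(t)[d]_t^{\nu_i}\bigr)_{d,\,\nu_i d/r},
\]
with each factor (and hence the whole product) equal to zero unless $\nu_i \mid r$ for all $i$; moreover, by inspection of the definition of $f(t)_{b,c}$, the factor vanishes unless the subscript $\nu_i d/r$ is a positive integer dividing $d$, i.e., unless also $r \mid d\nu_i$.

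Finally, I will reindex by setting $\mu := (d/r)\nu$, i.e.\ $\mu_i = d\nu_i/r$. The conditions $\nu_i \mid r$ and $r \mid d\nu_i$ become $\mu_i \mid d$ and $d \mid r\mu_i$ respectively, and the size condition $|\nu|=k$ becomes $|\mu|=m$; thus the admissible $\nu$ are in bijection with the partitions $\mu \in \pr(m;d,r)$. Under this substitution, $\pi_\nu = \pi_{(r/d)\mu}$, $\nu_i = \frac{r}{d}\mu_i$, and $\nu_i d/r = \mu_i$, yielding precisely the right-hand side of~(\ref{mrdk2}). The main obstacle is purely bookkeeping: verifying that Lemma~\ref{ckr} applies correctly in each smaller ambient group $S_{\nu_i d}$, and confirming that terms with $\nu_i d/r$ not a divisor of $d$ vanish for free, so that the sum over $\nu \in \pr(k)$ collapses cleanly to a sum over $\mu \in \pr(m;d,r)$.
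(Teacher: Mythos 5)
Your proposal is correct and follows essentially the same route as the paper: express the character sum as $\sum_{\sigma \in C_{S_n}(\gamma_n^k)\cap S_{r^m}}t^{\exc(\sigma)}$, decompose by the cycle type of $\Phi(\sigma)$, apply Lemmas~\ref{excprod} and~\ref{ckr}, and reindex via $\mu=\frac{d}{r}\nu$ to land on $\pr(m;d,r)$. The only differences are notational (your $\nu$ is the paper's $\mu$), and your explicit remarks about why the non-admissible terms vanish and how Lemma~\ref{ckr} is applied in the smaller ambient group $S_{\nu_i d}$ are correct and consistent with what the paper leaves implicit.
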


\begin{proof}
We have
\begin{eqnarray*}
\sum_{j=0}^{n-1}\theta_{d^k}^{r^m,j}t^j & = & \sum_{\sigma \in C_{S_n}(\gamma_n^{k}) \cap S_{r^m}}t^{\exc(\sigma)} \\ & = & \sum_{\mu \in \pr(k)}\sum_{\sigma \in \rc_{\mu,r}}t^{\exc(\sigma)} \\ & = & \sum_{\mu \in \pr(k)}\pi_\mu\prod_{i=1}^{l(\mu)}\sum_{\sigma \in \rc_{(\mu_i),r}}t^{\exc(\sigma)} \\ & = & \sum_{\mu \in \pr(k;r,d)}\pi_\mu\prod_{i=1}^{l(\mu)}\left(tA_{\mu_i-1}(t)[d]_t^{\mu_i}\right)_{d,d\mu_i/r}.
\end{eqnarray*}
Indeed, the first two equalities follow immediately from the definitions of $\theta^{{r^m},j}$ and $\rc_{\mu,r}$, respectively, while the third follows from Lemma \ref{excprod} and the fourth from Lemma \ref{ckr}. 

Now for $\mu \in \pr(k;r,d)$, set $\nu:=\nu(\mu):=\frac{d}{r}\mu$, so $\mu=\frac{r}{d}\nu$.  Now $\frac{d}{r}k=m$ and, since $\mu_i|r|d\mu_i$, we have $\nu_i|d|r\nu_i$ for all $i$.  Thus $\nu \in \pr(m;d,r)$.  From this we see that the map $\mu \mapsto \nu(\mu)$ is a bijection from $ \pr(k;r,d)$ to $ \pr(m;d,r)$. Thus we have
\[
\sum_{j=0}^{n-1}\theta_{d^k}^{r^m,j}t^j=\sum_{\nu \in \pr(m;d,r)}\pi_{\frac{r}{d}\nu}\prod_{i=1}^{l(\nu)}\left(tA_{\frac{r}{d}\nu_i-1}(t)[d]_t^{\frac{r}{d}\nu_i}\right)_{d,\nu_i}
\]
as claimed.
\end{proof}

\begin{thm} \label{last}
Say $\lambda \in \pr(n)$ and $n=kd$.  
\begin{enumerate}
\item If there is some $r \in [n]$ such that $d$ does not divide $rm_r(\lambda)$ then $\theta^{\lambda,j}_{d^k}=0$ for all $0 \leq j \leq n-1$.
\item If $d$ divides $rm_r(\lambda)$ for all $r \in [n]$ then
\begin{equation*}
\sum_{j=0}^{n-1}\theta_{d^k}^{\lambda,j}t^j={{\frac{n}{d}} \choose {\frac{1m_1(\lambda)}{d},\frac{2m_2(\lambda)}{d},\ldots,\frac{nm_n(\lambda)}{d}}} \prod_{r=1}^{n}\sum_{j=0}^{rm_r(\lambda)-1}\theta_{d^{rm_r(\lambda)/d}}^{r^{m_r(\lambda)},j} t^j.
\end{equation*}
\end{enumerate} 
\end{thm}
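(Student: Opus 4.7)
The plan is to establish (1) and (2) by a direct combinatorial decomposition of $C_{S_n}(\tau) \cap S_\lambda$ that parallels the plethystic factorization (\ref{larmr}) used in the proof of Theorem \ref{chilamnu}. Throughout, take $\tau = \gamma_n^{-k}$, whose orbits $X_1, \ldots, X_k$ on $[n]$ each have size $d$.

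Given $\sigma \in C_{S_n}(\tau) \cap S_\lambda$, let $\sigma^{(r)}$ denote the product of the $r$-cycles of $\sigma$ and let $Y_r$ be its support, so that $|Y_r| = rm_r(\lambda)$. Since conjugation by $\tau$ permutes the cycles of $\sigma$ while preserving their lengths, each $Y_r$ is $\tau$-invariant and must therefore be a union of the $\tau$-orbits $X_i$. This forces $d \mid rm_r(\lambda)$ for every $r$, which proves (1): if the divisibility fails for some $r$, no such $\sigma$ exists.

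For (2), set $N_r := rm_r(\lambda)/d$, so that $\sum_r N_r = k$. Each admissible $\sigma$ then determines a partition $(B_1,\ldots,B_n)$ of $[k]$ with prescribed block sizes $|B_r| = N_r$, via $B_r = \{i : X_i \subseteq Y_r\}$, and there are exactly $\binom{k}{N_1,\ldots,N_n}$ such labeled partitions. Fixing one, and writing $Z_r := \bigcup_{i \in B_r} X_i$, I would count, for each $r$, the permutations $\sigma^{(r)}$ supported on $Z_r$ of cycle type $r^{m_r(\lambda)}$, centralized by $\tau|_{Z_r}$, with prescribed excedance contribution $j_r$. The order-preserving bijection $Z_r \to [rm_r(\lambda)]$ preserves excedance counts, and a direct computation (tracking where $i + qk$ maps) shows that it conjugates $\tau|_{Z_r}$ to the standard element $\gamma_{rm_r(\lambda)}^{-N_r}$ of $S_{rm_r(\lambda)}$. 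Hence this count equals $\theta_{d^{N_r}}^{r^{m_r(\lambda)},j_r}$. Since the supports $Y_r$ are pairwise disjoint, one has $\exc(\sigma) = \sum_r \exc(\sigma^{(r)})$, and summing over all labeled partitions and all compositions $j = j_1 + \cdots + j_n$ produces exactly the product formula claimed in (2).

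The main obstacle is the transfer step: verifying that the restriction $\tau|_{Z_r}$ corresponds, under the order-preserving bijection, to the standard cyclic element $\gamma_{rm_r(\lambda)}^{-N_r}$. This is the same type of order-preserving reindexing that appeared in the proof of Lemma \ref{excprod}, and once confirmed it immediately identifies the internal count with $\theta_{d^{N_r}}^{r^{m_r(\lambda)},j_r}$, since the number of permutations of a given cycle type, excedance count, and centralized by a fixed element depends only on the conjugacy class of that element.
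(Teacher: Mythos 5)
Your proposal is correct and follows essentially the same route as the paper's proof: part (1) via $\tau$-invariance of the supports of the $r$-cycle parts forcing them to be unions of the size-$d$ orbits $X_i$, and part (2) via the multinomial count of labeled partitions of $[k]$ together with order-preserving bijections transferring $\tau|_{Z_r}$ to a power of $\gamma_{rm_r(\lambda)}$ while preserving excedances. The "transfer step" you flag is exactly what the paper handles with its equation $f_r^{-1}\beta_r f_r=\gamma_{rm_r(\lambda)}^{rm_r(\lambda)/d}$, also asserted there by direct calculation.
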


\begin{proof}
Let $\sigma \in S_\lambda$, so $\sigma$ can be written as a product of disjoint cycles in which there appear exactly $m_r(\lambda)$ $r$-cycles for each $r \in [n]$.  For each such $r$, let $\sigma_r$ be the product of all these $m_r(\lambda)$ $r$-cycles, and let $B_r$ be the support of $\sigma_r$.  
If  $\sigma \in C_{S_n}(\gamma_n^k)$ then $\gamma_n^k$ commutes with each $\sigma_r$.  It follows that $\gamma_n^k$ stabilizes each $B_r$ setwise.  Therefore, for each $r \in [n]$, there is some $Y_r \subseteq [k]$ such that 
$$B_r=\biguplus_{i\in Y_r} X_i,$$  where 
$X_i = \{h \in [n] : h\equiv i \mod k\}$.   Since $|X_i| = d$ for all $i$,   it follows that  $rm_r(\lambda)=|B_r|=d |Y_r|$, so (1) holds.

For each $r \in [n]$, let $\beta_r \in S_{B_r}$ act as $\gamma_n^k$ does on $B_r$.  We have $\gamma_n^k=\prod_{r=1}^n\beta_r$, and $\beta_r$ commutes with $\sigma_r$ for all $r$.  For each $r \in [n]$, let $f_r$ be the unique order preserving bijection from $B_r$ to $[rm_r(\lambda)]$.  Direct calculation shows that for each $r$, we have
\begin{equation} \label{ftf}
f_r^{-1}\beta_rf_r=\gamma_{rm_r(\lambda)}^{rm_r(\lambda)/d}.
\end{equation}
Also,
\begin{equation} \label{excequiv}
\exc(\sigma)=\sum_{r=1}^{n}\exc(\sigma_r)=\sum_{r=1}^{n}\exc(f_r^{-1}\sigma_rf_r).
\end{equation} 

On the other hand suppose we are given an ordered $n$-tuple $(Y_1,\ldots,Y_n)$ of subsets of $[k]$ such that 
\begin{itemize}
\item[(a)] $|Y_r|=rm_r(\lambda)/d$ for each $r \in [n]$, and
\item[(b)] $[k]=\biguplus_{r=1}^{n}Y_r$,
\end{itemize}
and we set $B_r=\uplus_{i\in Y_r} X_i$ for each $r$. 
Then each $B_r$ is $\gamma_n^k$-invariant, and if we set  $\beta_r$ equal to the restriction of $\gamma_n^k$ to $B_r$, we can obtain $\sigma \in C_{S_n}(\gamma_n^k) \cap S_\lambda$ by choosing, for each $r$, any
$\sigma_r \in S_{B_r}$ of shape $r^{m_r(\lambda)}$ commuting with $\beta_r$ and setting $\sigma=\prod_{r=1}^{n}\sigma_r$. 
The number of $n$-tuples satisfying (a) and (b) is 
\[
{{\frac{n}{d}} \choose {\frac{1m_1(\lambda)}{d},\frac{2m_2(\lambda)}{d},\ldots,\frac{nm_n(\lambda)}{d}}},
\]
and the theorem now follows from (\ref{ftf}) and (\ref{excequiv}).
\end{proof}

Comparing Theorem \ref{mrkdth2} with Theorem \ref{mrkdth} and then comparing Theorem \ref{last} with Theorem \ref{chilamnu}, we obtain 
$$\chi_{d^k}^{Q_{\lambda,j}} = \theta_{d^k}^{\lambda,j}$$
for all $\lambda \in \pr( n)$, $j\in \{0,1,\dots,n-1\}$, and $d,k \in \pp$ such that $dk=n$.
Theorem~\ref{main1} now follows from Proposition~\ref{evalth}.

\section{Some additional results} \label{finsec}

As mentioned in the Introduction, Theorem~\ref{main1} is a refinement of  (\ref{majexcfixeq}). In this section we show that  the less refined result  can also be obtained as a consequence of  \cite[Corollary 4.3]{ShWa1}, which states that

\begin{equation} \label{formAn} A_n^{\maj,\exc, \fix}(q,t,s)  =   \sum_{m = 0}^{\lfloor {n \over 2} \rfloor}  \!\!\!\!\sum_{\scriptsize
\begin{array}{c} k_0\ge 0  \\ k_1,\dots, k_m \ge 2 \\ \sum k_i = n
\end{array}} \left[\begin{array}{c} n \\k_0,\dots,k_m\end{array}\right]_q\,\,
s^{k_0}
\prod_{i=1}^m tq[k_i-1]_{tq}.\end{equation}
  Although the alternative proof does not directly involve the Eulerian quasisymmetric functions, the proof of (\ref{formAn})  given in  \cite{ShWa1} does.  Hence the Eulerian quasisymmetric functions play an indirect role.   In this section we also prove the identity (\ref{otherform}) mentioned in the introduction and as a consequence obtain another cyclic sieving result.  

\begin{thm}\label{3pol} Let $dk=n$. Then
the following expressions  are all equal.
\begin{enumerate}
\item[(i)] $A_n^{\maj,\exc,\fix}(\omega_d,t \omega_d^{-1},s)$\\

\item[(ii)] $\sum_{\sigma \in C_{S_n}(\gamma_n^k)}t^{\exc(\sigma)}s^{\fix(\sigma)}$\\

\item[(iii)] $A^{\exc,\fix}_k(t, \frac{s^d+t[d-1]_t}{[d]_t})[d]_t^k$.
\end{enumerate}
\end{thm}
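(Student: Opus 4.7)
The plan is to prove the chain of equalities $(i) = (ii) = (iii)$. The first equality is essentially a restatement of work already done in the paper, while the second follows from a direct combinatorial analysis of the centralizer $C_{S_n}(\gamma_n^k)$ using the machinery developed in Section~\ref{eqth1sec}.

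For $(i) = (ii)$: This is exactly equation (\ref{majexcfixeq}), once we note that $\gamma_n^k$ has order $n/\gcd(n,k)=d$, so it is a representative element of $G_n$ of order $d$. Equation (\ref{majexcfixeq}) itself is obtained by summing the refined identity (\ref{maineq2}) over all $\lambda \in \pr(n)$, weighted by $s^{m_1(\lambda)}$, since every $\sigma \in S_\lambda$ has exactly $m_1(\lambda)$ fixed points. Thus this step reduces to the already-proved Theorem~\ref{main1}.

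For $(ii) = (iii)$: I would use the explicit parametrization of $C_{S_n}(\gamma_n^k)$ from Lemma~\ref{centlem}: every $\sigma \in C_{S_n}(\gamma_n^k)$ is uniquely of the form $\sigma = \tau_1^{e_1}\cdots\tau_k^{e_k}\hat\rho$ with $\rho \in S_k$ and $e_1,\ldots,e_k \in \{0,\ldots,d-1\}$, with $\exc(\sigma)$ given by (\ref{centeq}). A direct inspection of (\ref{centact}) shows that $r+qk$ is a fixed point of $\sigma$ if and only if $r \in \fx(\rho)$ and $e_r=0$; hence $\fix(\sigma) = d \cdot |\{r \in \fx(\rho) : e_r = 0\}|$. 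Both statistics depend on the choices of $e_r$ independently across $r \in [k]$, once $\rho$ is fixed, so the sum factors over $r$.

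Classifying each $r \in [k]$ according to whether $r \in \Exc(\rho)$, $r \in \fx(\rho)$, or $r\rho < r$, a short computation of the local contribution over $e_r \in \{0,\ldots,d-1\}$ yields
$$t^d + t[d-1]_t = t[d]_t, \qquad s^d + t[d-1]_t, \qquad [d]_t,$$
respectively. The key cancellation is the identity $t^d + t[d-1]_t = t[d]_t$, which collapses the excedance contribution to a clean factor of $t$ per excedance of $\rho$. Substituting and factoring out $[d]_t^k$ gives
$$\sum_{\sigma \in C_{S_n}(\gamma_n^k)} t^{\exc(\sigma)} s^{\fix(\sigma)} = [d]_t^k \sum_{\rho \in S_k} t^{\exc(\rho)}\!\left(\frac{s^d + t[d-1]_t}{[d]_t}\right)^{\!\fix(\rho)}\!,$$
which is precisely expression (iii). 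The only real obstacle is keeping the three case contributions straight and noticing the simplification on the excedance factor; everything else is bookkeeping.
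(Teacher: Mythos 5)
Your proof is correct, and your handling of (ii)$=$(iii) is essentially the paper's own argument: both use the parametrization $\sigma=\tau_1^{e_1}\cdots\tau_k^{e_k}\hat\rho$ from Lemma~\ref{centlem}, factor the sum over $\Phi^{-1}(\rho)$ into local contributions for each $r\in[k]$, and exploit the collapse $t^d+t[d-1]_t=t[d]_t$ on excedance positions (the paper packages this into the functions $f_{\rho,i}(t,s)$, but the computation is the same, and your explicit verification that $r+qk$ is fixed iff $r\in\fx(\rho)$ and $e_r=0$ is a detail the paper leaves implicit). Where you genuinely diverge is the remaining equality: you obtain (i)$=$(ii) by summing the refined identity \eqref{maineq2} over all $\lambda\in\pr(n)$ weighted by $s^{m_1(\lambda)}$, i.e.\ by invoking Theorem~\ref{main1}, which is legitimate since that theorem is proved before this one. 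The paper instead proves (i)$=$(iii) directly: it sets $q=1$ in \eqref{expgeneqfix} to get an exponential generating function for (iii), evaluates the explicit formula \eqref{formAn} at $q=\omega_d$ via Proposition~\ref{multinom}, and matches the two generating functions. The trade-off is that your route is shorter given what has already been established, while the paper's route is deliberately independent of the heavy machinery behind Theorem~\ref{main1} (the power-sum expansion of $Q_{(n),j}$ and the character computations), showing that the unrefined identity \eqref{majexcfixeq} needs only \cite[Corollary 4.3]{ShWa1} and the root-of-unity evaluation of $q$-multinomial coefficients. Both arguments are sound; yours simply forfeits that independence.
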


\begin{proof}  ((ii)=(iii)) 
For $\rho \in S_k$ and $i \in [k]$ set
$$f_{\rho,i}(t,s) := \begin{cases} t[d]_t &\mbox{if }  i \in \Exc(\rho) \\ s^d + t[d-1]_t  &\mbox{if } i \in \fx(\rho) \\ [d]_t &\mbox{otherwise.} \end{cases}$$
It follows from Lemma~\ref{centlem} that
\begin{eqnarray*}
\sum_{\sigma \in \Phi^{-1}(\rho)}t^{\exc(\sigma)}s^{\fix(\sigma)}&=& \prod_{i=1}^{k}f_{\rho,i}(t,s) \\ &= & t^{\exc(\rho)}[d]_t^{k-\fix(\rho)}(s^d+t[d-1]_t)^{\fix(\rho)}.
\end{eqnarray*}
By summing over all $\rho \in S_k$, we obtain the equality of the expressions in (ii) and (iii).

((i) = (iii))  By setting $q=1$ in (\ref{expgeneqfix}) we obtain 
\begin{eqnarray} \nonumber \sum_{k\ge 0} A_k^{\exc,\fix}(t,\frac{s^d+t[d-1]_t}{[d]_t}) [d]_t^k \frac{ z^k}{k!} & =&  \frac{(1-t) e^{(s^d+t[d-1]_t)z}} {e^{t[d]_t z} - t e^{[d]_t z}}
\\ &=& \nonumber
\frac{(1-t)e^{s^dz}}{e^{(t[d]_t -t[d-1]_t)z} - t e^{([d]_t -t[d-1]_t)z}}
\\ &=& \label{setq=1}
\frac{(1-t)e^{s^dz}}{e^{t^d z}-te^z}
\end{eqnarray}

It follows from  (\ref{formAn}) and Proposition~\ref{multinom} that
$$A_{dk}^{\maj,\exc,\fix}(\omega_d,t\omega_d^{-1},s) =   \sum_{m \ge 0}   \!\!\!\!\sum_{\scriptsize
\begin{array}{c} l_0\ge 0  \\ l_1,\dots, l_m \ge 1 \\ \sum l_i = k
\end{array}} \left(\begin{array}{c} k \\l_0,\dots,l_m\end{array}\right)\,\,s^{d l_0}
\prod_{i=1}^m t[dl_i-1]_{t}.$$
Hence, by straight-forward  manipulation of formal power series we have,
\begin{eqnarray*}& &\hspace{-.7in} \sum_{k\ge 0}  A_{dk}^{\maj,\exc,\fix}(\omega_d,t \omega_d^{-1},s) \frac{z^k}{k!} \\ & = & \sum_{k\ge 0} \sum_{m \ge 0}   \!\!\!\!\sum_{\scriptsize
\begin{array}{c} l_0\ge 0  \\ l_1,\dots, l_m \ge 1 \\ \sum l_i = k
\end{array}} \left(\begin{array}{c} k \\l_0,\dots,l_m\end{array}\right)\,\,s^{d l_0}
\prod_{i=1}^m t[dl_i-1]_{t} \frac {z^k} {k!}
\\ &=& e^{s^dz} \sum_{m,k \ge 0} \!\!  \!\!\!\!\sum_{\scriptsize
\begin{array}{c}\sum l_i =k \\ l_1,\dots, l_m \ge 1\end{array}} \left(\begin{array}{c} k \\l_1,\dots,l_m\end{array}\right)\,\,
\prod_{i=1}^m t[dl_i-1]_{t} \frac {z^k} {k!}
\\ &=& e^{s^dz} \sum_{m \ge 0} \sum_{ l_1,\dots, l_m \ge 1} \prod_{i=1}^m t[dl_i-1]_{t} \frac {z^{l_i} }{l_i!}
\\ &=& e^{s^dz} \sum_{m\ge 0} \left( \sum_{l\ge 1}  t[dl-1]_{t} \frac {z^{l} }{l!}\right)^m.
\end{eqnarray*}
Further manipulation yields,
\begin{eqnarray*}\sum_{m\ge 0} \left( \sum_{l\ge 1}  t[dl-1]_{t} \frac {z^{l} }{l!}\right)^m &=&
 \frac{ 1} {1- \left( \sum_{l\ge 1}  t[dl-1]_{t} \frac {z^{l} }{l!}\right)}
\\&=&
\frac{ 1-t }{1- t + \sum_{l\ge 1}  t(t^{dl-1}-1) \frac {z^{l} }{l!}}
\\&=&
\frac{ 1-t}{1- t + e^{t^dz}-1 -t(e^z -1)}
\\&=&
 \frac{1-t} {e^{t^dz} - t e^z}.
\end{eqnarray*}
Hence 
$$ \sum_{k\ge 0}  A_{dk}^{\maj,\exc,\fix}(\omega_d,t \omega_d^{-1},s) \frac{z^k}{k!}=  \frac{(1-t)e^{s^dz}} {e^{t^dz} - t e^z}.$$ The result now follows from  (\ref{setq=1}).
\end{proof}

\begin{cor} \label{cor3pol} Let $dk=n$.  Then 
$$A_n^{\maj,\exc}(\omega_d,t\omega_d^{-1})
 = A_k(t)[d]_t^k.$$
\end{cor}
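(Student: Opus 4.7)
The plan is to deduce Corollary~\ref{cor3pol} as an immediate specialization of Theorem~\ref{3pol} at $s=1$. Indeed, setting $s=1$ in expression (i) of Theorem~\ref{3pol} kills the $\fix$ variable and yields exactly $A_n^{\maj,\exc}(\omega_d,t\omega_d^{-1})$, while setting $s=1$ in expression (iii) yields $A_k^{\exc,\fix}(t,\alpha(t))\,[d]_t^k$, where
\[
\alpha(t) := \frac{1 + t[d-1]_t}{[d]_t}.
\]

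The key observation I would record is the telescoping identity
\[
1 + t[d-1]_t = 1 + t + t^2 + \cdots + t^{d-1} = [d]_t,
\]
so that $\alpha(t) = 1$. Substituting a fixed-point weight of $1$ into a permutation statistic generating function simply drops the $\fix$ tracking, so $A_k^{\exc,\fix}(t,1) = A_k(t)$ by the very definition of the Eulerian polynomial given in Section~\ref{defs}. Combining these two observations gives
\[
A_n^{\maj,\exc}(\omega_d,t\omega_d^{-1}) = A_k^{\exc,\fix}(t,1)\,[d]_t^k = A_k(t)\,[d]_t^k,
\]
which is the claim of the corollary.

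There is essentially no obstacle here: the entire content has been done in Theorem~\ref{3pol}, and the corollary reduces to the trivial algebraic identity $1+t[d-1]_t = [d]_t$. The only thing worth mentioning is that one should write the proof in one or two lines, explicitly pointing out this identity so that the reader is not left to verify the specialization on their own.
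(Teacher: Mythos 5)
Your proof is correct and matches the paper's intended argument: the corollary is obtained by setting $s=1$ in the equality of expressions (i) and (iii) of Theorem~\ref{3pol}, using the identity $1+t[d-1]_t=[d]_t$ so that the second argument of $A_k^{\exc,\fix}$ becomes $1$ (the paper states exactly this specialization in the Introduction). Nothing further is needed.
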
 

A similar result holds for the cycle-type $q$-Eulerian polynomials $A_{(n)}^{\maj,\exc} (q,t)$. 

\begin{thm} \label{circor} Let $dk = n-1$. Then $$A_{(n)}^{\maj,\exc}(\omega_d,t \omega_d^{-1}) =t A_{k}(t)\, [d]_t^k .$$
\end{thm}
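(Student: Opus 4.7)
The plan is to apply Proposition \ref{evalth} in the second case ($dk = n-1$) and then read off the answer directly from Theorem \ref{psum}. All three ingredients are already available, so the argument should be short.

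First, I would unwind the substitution $(q,t) \mapsto (\omega_d, t\omega_d^{-1})$ in the definition of $A_{(n)}^{\maj,\exc}(q,t)$. Grouping permutations by their excedance number gives
\[
A_{(n)}^{\maj,\exc}(\omega_d, t\omega_d^{-1}) = \sum_{j=0}^{n-1} t^j \sum_{\sigma \in C_{(n),j}} \omega_d^{\,\maj(\sigma) - \exc(\sigma)} = \sum_{j=0}^{n-1} a_{(n),j}(\omega_d)\, t^j,
\]
which reduces the theorem to evaluating each $a_{(n),j}(\omega_d)$.

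Next, since the hypothesis is $dk = n-1$, Proposition \ref{evalth} yields
\[
a_{(n),j}(\omega_d) = \chi^{Q_{(n),j}}_{1 d^k}
\]
for every $j$. Hence
\[
\sum_{j=0}^{n-1} a_{(n),j}(\omega_d)\, t^j = \sum_{j=0}^{n-1} \chi^{Q_{(n),j}}_{1 d^k}\, t^j,
\]
and by the definition of the coefficients $\chi^{Q_{(n),j}}_\nu$, the right-hand side is exactly the coefficient of $z_{1d^k}^{-1} \p_{1d^k}$ in the expansion $\sum_{j=0}^{n-1} Q_{(n),j}\, t^j = \sum_{\nu \in \pr(n)} z_\nu^{-1} G_\nu(t)\, \p_\nu$ provided by Theorem \ref{psum}. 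In other words, it equals $G_{1d^k}(t)$.

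Finally, I would compute $G_{1d^k}(t)$ directly from its definition. For $\nu = 1 d^k$ we have $l(\nu) = k+1$ and $g(\nu) = \gcd(1,d,\ldots,d) = 1$, so the erasing operator $(\,\cdot\,)_{g(\nu)} = (\,\cdot\,)_1$ acts as the identity on any power series. Therefore
\[
G_{1d^k}(t) = tA_k(t)\, [1]_t\, [d]_t^k = tA_k(t)\, [d]_t^k,
\]
which is the desired identity. There is essentially no obstacle here: the theorem is just the specialization $\nu = 1d^k$ of Theorem \ref{psum}, transported via Proposition \ref{evalth}. The only point that deserves care is checking that the case-split in Proposition \ref{evalth} lands on $\nu = 1d^k$ rather than $\nu = d^k$, which is forced by $dk = n-1$ and the condition $n \in \{dk, dk+1\}$.
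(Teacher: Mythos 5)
Your proposal is correct and follows exactly the paper's argument: apply Proposition~\ref{evalth} in the $dk=n-1$ case to get $a_{(n),j}(\omega_d)=\chi^{Q_{(n),j}}_{1d^k}$, then read off $G_{1d^k}(t)=tA_k(t)[d]_t^k$ from Theorem~\ref{psum} (the paper states this computation without the explicit remark that $g(1d^k)=1$ makes the erasing operator trivial, but that is the same observation). No differences worth noting.
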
 

\begin{proof} We apply Proposition~\ref{evalth} which tells us that for all $j$, the coefficient of $t^j$ in $A_{(n)}^{\maj,\exc}(\omega_d,t \omega_d^{-1})$ is equal to $\chi^{Q_{(n),j}}_{1d^k}$. By Theorem~\ref{psum}, $\chi^{Q_{(n),j}}_{1d^k}$ equals the coefficient of $t^{j}$ in 
$G_{1d^k} (t)= t A_{k}(t) [d]_t^k$. \end{proof}

\begin{cor} \label{cycleth} Let $S_{n,j}$ be the set of permutations in $S_n$ with $j$ excedances.  Then  the triple $(G_{n}, S_{n,j}, a_{(n+1),j+1}(q))$ exhibits the cyclic sieving phenomenon for all $j \in \{0,1,\dots,n-1\}$.  
\end{cor}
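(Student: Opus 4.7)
The plan is to verify the cyclic sieving identity directly by combining Theorem~\ref{circor} with (the $s=1$ specialization of) Theorem~\ref{3pol}. By definition of cyclic sieving, we must show that for every $\tau\in G_n$ of order $d$,
\[
a_{(n+1),j+1}(\omega_d)=|C_{S_n}(\tau)\cap S_{n,j}|,
\]
since the fixed points of $\tau$ acting on $S_{n,j}$ by conjugation are precisely $C_{S_n}(\tau)\cap S_{n,j}$. First I would confirm that $G_n$ indeed acts on $S_{n,j}$ by conjugation: this is immediate from the observation already invoked in Section~\ref{eqth1sec} that conjugation by $\gamma_n$ preserves both cycle type and excedance count, so it preserves $\cjl$ for every $\lambda\vdash n$, hence also $S_{n,j}=\biguplus_{\lambda\vdash n}\cjl$. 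As remarked after the definition of cyclic sieving, two elements of the same order in the cyclic group $G_n$ have the same fixed-point set, so without loss of generality I may take $\tau=\gamma_n^k$ with $dk=n$.

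For the combinatorial side, I would specialize Theorem~\ref{3pol} at $s=1$. The key observation is
\[
\frac{1^d+t[d-1]_t}{[d]_t}=\frac{1+t+t^2+\cdots+t^{d-1}}{[d]_t}=1,
\]
so the equality (ii)\,=\,(iii) becomes
\[
\sum_{\sigma\in C_{S_n}(\tau)}t^{\exc(\sigma)}=A_k^{\exc,\fix}(t,1)\,[d]_t^k=A_k(t)\,[d]_t^k.
\]
Extracting the coefficient of $t^j$ gives $|C_{S_n}(\tau)\cap S_{n,j}|=[t^j]A_k(t)[d]_t^k$.

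For the polynomial side, I would apply Theorem~\ref{circor} with $n$ replaced by $n+1$: the hypothesis of that theorem becomes $dk=(n+1)-1=n$, matching our current $d,k$. The conclusion reads
\[
A_{(n+1)}^{\maj,\exc}(\omega_d,t\omega_d^{-1})=tA_k(t)\,[d]_t^k.
\]
Since $A_\lambda^{\maj,\exc}(q,tq^{-1})=\sum_i a_{\lambda,i}(q)\,t^i$ by the definitions of $A_\lambda^{\maj,\exc}$ and $a_{\lambda,i}$, the coefficient of $t^{j+1}$ on the left is $a_{(n+1),j+1}(\omega_d)$, while on the right it is $[t^j]A_k(t)[d]_t^k$. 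Comparing with the previous paragraph yields
\[
a_{(n+1),j+1}(\omega_d)=|C_{S_n}(\tau)\cap S_{n,j}|,
\]
which is exactly the cyclic sieving phenomenon for $(G_n,S_{n,j},a_{(n+1),j+1}(q))$.

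There is no real obstacle: once Theorems~\ref{circor} and \ref{3pol} are in hand, the whole argument is the observation that the shift $t\mapsto t\omega_d^{-1}$ built into the definition of $a_{\lambda,j}(q)$ absorbs the extra factor of $t$ in Theorem~\ref{circor} so that $tA_k(t)[d]_t^k$ matches the generating polynomial for the excedance statistic on $C_{S_n}(\tau)$ after a degree shift of one. The only point that warrants care is keeping straight the index shift between $a_{(n+1),j+1}$ and the coefficient of $t^{j+1}$ (as opposed to $t^j$) in $A_{(n+1)}^{\maj,\exc}(\omega_d,t\omega_d^{-1})$; this is precisely what forces the indexing $a_{(n+1),j+1}$ rather than $a_{(n+1),j}$ in the triple.
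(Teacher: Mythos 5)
Your proof is correct and follows essentially the same route as the paper's: both reduce the cyclic sieving identity to the equation $t\sum_{\sigma\in C_{S_n}(\tau)}t^{\exc(\sigma)}=A_{(n+1)}^{\maj,\exc}(\omega_d,t\omega_d^{-1})$ and verify it by showing each side equals $tA_k(t)[d]_t^{n/d}$, via the $s=1$ specialization of Theorem~\ref{3pol} on one side and Theorem~\ref{circor} (with $n$ shifted to $n+1$) on the other. Your added care with the coefficient extraction and the $j\mapsto j+1$ index shift is a correct elaboration of what the paper leaves implicit.
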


\begin{proof}
That the triple exhibits the  cyclic sieving phenomenon is equivalent to the equation $$t\sum_{\sigma \in C_{S_{n}}(g)}t^{\exc(\sigma)} = A_{(n+1)}^{\maj,\exc}(\omega_d, t\omega_d^{-1}),$$ for all divisors $d$ of $n$ and  $g \in G_n$ of order $d$.
This equation  is a consequence of    Theorems~\ref{3pol} and~\ref{circor},  which respectively  say  that    the left side and the right side of the equation both equal $tA_k(t) [d]_t^{\frac n d}$.
\end{proof}

\end{document}